\newtheorem{assump}{Assumption}
\newtheorem{defn}{Definition}[section]
\tikzstyle{startstop} = [rectangle, draw, rounded corners, align=center, minimum width=3cm, minimum height=1cm,text centered]
\tikzstyle{decision} = [diamond, draw, fill=blue!20, 
\tikzstyle{block} = [rectangle, draw, fill=blue!10, align=center, rounded corners, minimum width=3cm, minimum height=1cm]
\tikzstyle{blockcast} = [rectangle, draw, fill=red!10, align=center, rounded corners, minimum width=3cm, minimum height=0.45cm]
\tikzstyle{line} = [draw, -latex']
\tikzstyle{cloud} = [draw, ellipse,fill=red!20, node distance=3cm,
\title[Byzantine Resilient Non-Convex SVRG with Distributed Batch Gradient Computations]{Byzantine Resilient Non-Convex SVRG with Distributed Batch Gradient Computations}
\begin{document}
	
	\maketitle
	
	\begin{abstract}%
		In this work, we consider the distributed stochastic optimization problem of minimizing a non-convex function $f(x) = \mathbb{E}_{\xi \sim \mathcal{D}} f(x; \xi)$ in an adversarial setting, where the individual functions $f(x; \xi)$ can also be potentially non-convex. We assume that at most $\alpha$-fraction of a total of $K$ nodes can be Byzantines. We propose a robust stochastic variance-reduced gradient (SVRG) like algorithm for the problem, where the batch gradients are computed at the worker nodes (WNs) and the stochastic gradients are computed at the server node (SN). For the non-convex optimization problem, we show that we need $\tilde{O}\left( \frac{1}{\epsilon^{5/3} K^{2/3}}  + \frac{\alpha^{4/3}}{\epsilon^{5/3}} \right)$ gradient computations on average at each node (SN and WNs) to reach an $\epsilon$-stationary point. The proposed algorithm guarantees convergence via the design of a novel Byzantine filtering rule which is independent of the problem dimension. Importantly, we capture the effect of the fraction of Byzantine nodes $\alpha$ present in the network on the convergence performance of the algorithm. 
	\end{abstract}
	
	
	\section{Introduction}
	In the current machine learning landscape, with the data sizes growing exponentially, distributed learning has become an important paradigm. In a distributed learning setup, multiple distributed nodes along with a server node perform the learning task at hand. In distributed settings, the computational load on the server node is relieved as heavy computations are usually distributed across the multiple nodes in the network, see \citet{Zinkevich_NIPS_2010,Recht_NIPS_2011,Dekel_JMLR_2012,Ho_NIPS_2013}. With multiple nodes in the network, the problem of robust learning becomes important as some of the nodes in the network can potentially be Byzantine. These Byzantine nodes can adversely affect the convergence performance of the algorithm. Therefore, it is important to design algorithms which are robust to Byzantine actions and at the same time provide sufficient convergence guarantees. In this work, we propose one such algorithm for distributed learning, when a maximum $\alpha \in [0 , \frac{1}{2})$ fraction of nodes in the network are Byzantines. 
	
	Many works in the past have looked into the problem of learning in the presence of Byzantines, e.g., in \citet{Alistarh_NIPS_2018, Su_2018_securing, Recht_NIPS_2011, Yin_PMLR_2018,Yin_PMLR_2019,Blanchard_NIPS_2017, Xie_2018_phocas,Xie_2018_zeno}. However, only a handful have considered the problems when the objective functions are non-convex, partially, because of the difficulty in dealing with the non-convex objective functions, see \citet{Yin_PMLR_2018,Yin_PMLR_2019,Blanchard_NIPS_2017,Xie_2018_phocas,Xie_2018_zeno}. In this work, we deal with a non-convex optimization problem in the presence of Byzantines. To the best of our knowledge, this is the first work which provides guarantees for an SVRG-like algorithm (Sections \ref{Sec: Model} and \ref{Sec: Algo}) for distributed learning in the presence of Byzantines with non-convex objective functions. 
	
	\paragraph{Related Work:} In \citet{Su_2018_securing}, a robust gradient descent (GD) based algorithm was proposed in the presence of Byzantine nodes for learning with strongly convex objective functions. Also, in \citet{Yin_PMLR_2018,Yin_PMLR_2019}, the authors proposed Byzantine resilient GD algorithms for non-convex objective functions. Note that GD based algorithms require computations of the gradients of complete batches, therefore, in \citet{Blanchard_NIPS_2017,Xie_2018_zeno,Xie_2018_phocas,Li_AAAI_2019,Alistarh_NIPS_2018}, the authors proposed stochastic gradient descent (SGD) based robust learning algorithms in the presence of Byzantine nodes. In particular, \citet{Li_AAAI_2019, Alistarh_NIPS_2018}, considered the objective functions to be strongly convex and convex, respectively. On the other hand, in \citet{Blanchard_NIPS_2017} and \citet{Xie_2018_zeno,Xie_2018_phocas}, the authors proposed robust algorithms in the presence of Byzantine nodes for non-convex objective functions. 
	
	In this work, we consider a non-convex learning problem in the presence of at most $\alpha$-fraction Byzantine nodes. The proposed algorithm in this work is based on the stochastic variance reduced gradient (SVRG) and stochastically controlled stochastic gradient (SCSG) frameworks proposed in \citet{Reddy_PMLR_2016} and \citet{Lei_Jordan_SCSG}, respectively, for non-convex objective functions. SVRG and SCSG reduce the variance of stochastic gradients in SGD by introducing inner and outer loops and by computing a batch gradient at the beginning of every outer loop. SCSG based algorithms originally proposed in \citet{Lei_Jordan_PMLR_2017} have been shown to improve the performance of SVRG by selecting the number of inner iterations in a random fashion, whereas SVRG chooses the inner loop size in a deterministic fashion. Our work, extends the framework of non-convex SCSG to distributed setting, even in the presence of Byzantine nodes. Below, we list our contributions:
	\paragraph{Contributions:} 
	\begin{itemize}
		\item We propose a novel algorithm for distributed non-convex optimization in the presence of Byzantine nodes. The proposed algorithm is a variant of the SVRG algorithm (Section \ref{Sec: Algo}), where the computationally demanding batch gradients are computed at the worker nodes (WNs) and the stochastic gradients are computed at the server node (SN). 
		\item We provide convergence guarantees for the proposed algorithm as a function of $\alpha \in [0,\frac{1}{2})$, which is the upper bound on the fraction of Byzantine nodes present in the network. Importantly, we show that as the number of Byzantines reduces to zero ($\alpha = 0$), the algorithm improves the best known convergence rates for distributed non-convex optimization (Section \ref{Sec: Conv}) \citet{Hao_ICML_2019,Jiang_NIPS_2018}. 
		\item We propose a novel Byzantine filtering rule which is independent of the problem dimension. Specifically, the aggregation rule proposed in this work does not perform coordinate-wise operations and thereby the convergence rates are independent of the problem dimension. Therefore, the proposed algorithm is suitable for high-dimensional learning problems. 
	\end{itemize}

	\section{Model and Assumptions}
	\label{Sec: Model}
	We consider a model similar to the one in \citet{Alistarh_NIPS_2018}. We assume that there are $K$ WNs and a SN in the network. Specifically, we want to solve the following problem in a distributed fashion:
	\begin{align*}
		\min_{x \in \mathbb{R}^d} f(x) = \mathbb{E}_{\xi \sim \mathcal{D}} f( x; \xi)
	\end{align*}
	where the functions $f(\cdot~\! ; \xi): \mathbb{R}^d \rightarrow \mathbb{R}$, for $\xi$ chosen uniformly randomly from distribution $\mathcal{D}$, and  $f$, can potentially be non-convex. All the nodes including SN have access to the stochastic functions from distribution $\mathcal{D}$. Of all the WNs, we assume that at most $\alpha$-fraction are Byzantines with  $\alpha \in \left[ 0 , 1/2 \right)$.
	
	The set of honest nodes is denoted by $\mathcal{G}$. For each honest node $k \in \mathcal{G}$, the following assumptions are made.
	
	\begin{assump}[Gradient Lipschitz continuity] 
		\label{Ass: LipCont}
		All the functions $f(\cdot ~\! ; \xi)$ for any $\xi \sim \mathcal{D}$ and $f$ are assumed to be $L$-smooth, i.e., we have $f(y) \leq f(x) + \nabla f(x)^T(y - x) + \frac{L}{2} \| y - x \|^2_2,$ with $L > 0$.
	\end{assump}
	\begin{assump}[Bounded Variance]
		\label{Ass: BoundedGradVar}
		For any $\xi \sim \mathcal{D}$ we have $\| \nabla f(x ; \xi)  -  \nabla f(x)\| \leq \mathcal{V}$.
	\end{assump}
	\begin{remark}
		Assumption \ref{Ass: BoundedGradVar} is also required in \citet{Alistarh_NIPS_2018} to design the Byzantine filtering strategy. Moreover, for $\alpha = 0$, the above assumption can be relaxed to $\mathbb{E}\| \nabla f(x ; \xi)  -  \nabla f(x)\| \leq \mathcal{V}$, which is a standard assumption in stochastic non-convex optimization literature. 
	\end{remark}
	For non-convex problems, it is generally not feasible to measure the suboptimality of the function value, therefore, usually the convergence of non-convex problems is measured in terms of expected gradient norm square, $\mathbb{E} \|f(x) \|^2$. Below we define an $\epsilon$-stationary point for a non-convex optimization problem. 
	\begin{defn}[$\epsilon$-Stationary Point]
		A point $x$ is called $\epsilon$-stationary if $\| \nabla f(x) \|^2 \leq \epsilon$. Moreover, a stochastic algorithm is said to achieve $\epsilon$-stationarity in $t$ iterations if $\mathbb{E}[\| \nabla f(x_t) \|^2] \leq \epsilon$, where the expectation is over the stochasticity of the algorithm until time instant $t$.
	\end{defn}
	Next, we discuss the algorithm. 
	\section{Algorithm}
	\label{Sec: Algo}
	Now, we discuss the steps of Algorithm \ref{alg1}. As mentioned earlier, we consider a SVRG-like algorithm, which can be thought of as a distributed version of SCSG, where the WNs compute the batch gradients and share the computed batch gradients with the SN. Note that the algorithm is similar to SVRG except the fact that the algorithm uses a geometric random variable to decide the number of inner iterations. 
	
	The algorithm consists of a total of $T$ epochs. At the start of each epoch $t = 1, 2, \ldots, T$, the SN broadcasts the point $x_0^t$ to the WNs. The WNs are then expected to compute their batch gradients at $x_0^t$ and forward them to the SN. However, a Byzantine node forwards an arbitrary vector to the SN. The honest WNs, on the other hand, compute and forward their batch gradients, $\mu_{t}^{(k)}$. The set $\mathcal{S}_t^{(k)}$ for $t = 1,2, \ldots, T$ and $k \in [K]$ consists of $B$ i.i.d. sample functions $f(x_0^t; \xi_{t,i}^{(k)})$ with $\xi_{t,i}^{(k)} \sim \mathcal{D}$ and $\mathcal{S}_t^{(k)} = (\xi_{t,i}^{(k)})_{i=1}^B$. Note that for simplicity, we assume the batch sizes $|\mathcal{S}_t^{(k)}| = B$, for all $t = 1,2, \ldots, T$ and $k \in \mathcal{G}$. Furthermore, a generalization with variable batch sizes at different nodes is straightforward. To summarize, the vectors sent by node $k$ at the $t^{\text{th}}$ epoch to the SN is:
	\begin{align}\label{eq: Grad}
		\mu_t^{(k)} = \begin{cases} 
			\frac{1}{B}\sum_{i=1}^B \nabla f(x_0^t ; \xi_{t,i}^{(k)}) & \text{if}~k \in \mathcal{G}\\
			\ast & \text{if}~k \notin \mathcal{G}
		\end{cases}
	\end{align}
	where $\ast$ indicates an arbitrary vector sent by the Byzantine node. 
	
	After receiving the batch gradients from the WNs, the SN performs a {Byzantine Filtering Step}. In this step, the SN computes its estimate of the good set $\mathcal{G}_t$. The server node then aggregates the gradients received from the WNs in the set $\mathcal{G}_t$ and forms an estimate of the batch gradient, $\mu_t$. This is followed by the SVRG-like inner loop ($n = 1, 2,\ldots, N_t$), where the inner loop size $N_t$ is chosen randomly using a geometric random variable with parameter $\frac{B}{B + 1}$, i.e., $N_t \sim \text{Geom}\left(\frac{B}{B + 1}\right)$. The SN then performs the SVRG update step using the estimated batch gradient, $\mu_t$, and stochastic gradients computed at $x_{n - 1}^{t}$ and $x_0^t$. Next, we discuss the proposed Byzantine filtering strategy.
	
	\paragraph{Byzantine Filtering Step:} To design the Byzantine filtering rule, the SN uses the computed batch gradients from all the worker nodes, $(\mu_t^{(k)})_{k \in [K]}$ . Note that some of the received batch gradients may be arbitrary sent by the Byzantines (see \eqref{eq: Grad}).  Below we define the Byzantine filtering rule and then motivate its construction. 
	
	First, the SN computes the vector median of the received vectors $(\mu_t^{(k)})_{k \in [K]}$ using $\mathfrak{T}_\mu$ as defined in Algorithm \ref{alg1}. Then the SN filters the nodes it {\em believes} to be Byzantines and constructs $\mathcal{G}_t$ using the rule: $\mathcal{G}_t = \{k \in [K] : \| \mu_t^{(k)} - \mu_t^{\text{med}} \| \leq 2 \mathfrak{T}_\mu \}.$
	\begin{algorithm}[t] 
		\KwIn{$\tilde{x}_0 \in \mathbb{R}^d$, step sizes $(\eta_t)_{t = 1}^T$, batch size $B$, Variance Bound $\mathcal{V}$ (Assumption \ref{Ass: BoundedGradVar}), $\mathfrak{T}_{\mu} = 2 \mathcal{V} \sqrt{\frac{C}{B}}$ (Lemma \ref{lem: T_mu}) where $C = 2 \log \left( \frac{2K}{\delta} \right)$ with $\delta \in (0,1)$ (Theorem \ref{Thm:Convergence_BSVRG}).}
		\For{t = 1,2, \ldots, T}{ $x_0^t \leftarrow \tilde{x}_{t-1}$ \qquad{\bf  $\rightarrow$ Push to WNs};
			
			\For{k = 1,2,\ldots, K }{\vspace{- 0.1 in}\begin{align*}
					\mu_t^{(k)} = 	\begin{cases}
						\frac{1}{B} \sum_{i = 1}^B \nabla f(x_0^t; \xi_{t,i}^{(k)})  &  \text{for}~k \in \mathcal{G} \\
						\ast & \text{for}~k \notin \mathcal{G}    	
					\end{cases}	 \qquad	\textbf{$\rightarrow$ Push to SN}\qquad \qquad\qquad \qquad \qquad\qquad \qquad
				\end{align*} \vspace{-0.13 in} }
			$\mu_t^{\text{med}} \leftarrow \mu_t^{(k)}$ where $k \in [K]$ is any WN such that $|\{ k' \in [K]: \| \mu_t^{(k')} - \mu_t^{(k)} \| \leq \mathfrak{T}_\mu \}| > K/2;$
			
			$\mathcal{G}_t = \{k \in [K] : \| \mu_t^{(k)} - \mu_t^{\text{med}} \| \leq 2 \mathfrak{T}_\mu \};$
			
			\If{$|\mathcal{G}_t| < (1 - \alpha)K$}{$\mu_t^{\text{med}} \leftarrow \mu_t^{(k)}$ where $k \in [K]$ is any WN s.t. $|\{ k' \in [K]: \| \mu_t^{(k')} - \mu_t^{(k)} \| \leq 2 \mathcal{V} \}| > K/2;$
				
				$\mathcal{G}_t = \{k \in [K] : \| \mu_t^{(k)} - \mu_t^{\text{med}} \| \leq 4 \mathcal{V} \}$;}
			$\mu_t = \frac{1}{|\mathcal{G}_t|} \sum_{k \in \mathcal{G}_t} \mu_t^{(k)}$;
			
			\For{$n = 1,2,\ldots,N_t$, $N_t \sim \text{Geom}(\frac{B}{B + 1})$}
			{$v_{n-1}^t = \nabla f(x_{n-1}^t ; \xi_{n-1}^t)  - \nabla f(x_0^t ; \xi_{n-1}^t) + \mu_t$; 
				
				$x_n^t = x_{n-1}^t - \eta_t v_{n-1}^t$;
			}
			
			$\tilde{x}_t \leftarrow x_{N_t}^t$;
		}
		\KwOut{$\tilde{x}_a$ chosen uniformly randomly from $(\tilde{x}_t )_{t =1}^T$.}
		\caption{Byzantine SVRG with Distributed Batch Gradient Computations}
		\label{alg1}
	\end{algorithm}
	Note that the above rule is motivated by the fact that for good nodes $k \in \mathcal{G}$, we expect with high probability the batch gradients to be concentrated around the true gradient, $\nabla f(\cdot)$. However, with non-zero probability, the above set can be empty or can have $|\mathcal{G}_t|  < (1 - \alpha) K$. As we know that $|\mathcal{G}| \geq (1 - \alpha)K$, we would want to avoid such scenarios. For this purpose, if we have $|\mathcal{G}_t|  < (1 - \alpha) K$ using the first rule (see Algorithm \ref{alg1}), we update the definition of the median using $2 \mathcal{V}$ to define the new median and construct: 
	$\mathcal{G}_t = \{k \in [K] : \| \mu_t^{(k)} - \mu_t^{\text{med}} \| \leq 4 \mathcal{V} \}$. This condition ensures that we always have $|\mathcal{G}_t| \geq (1 - \alpha) K$.  Note that the Byzantine filtering rule is similar to the one used in \citet{Alistarh_NIPS_2018}. However, in contrast to \citet{Alistarh_NIPS_2018}, we do not maintain a running sum of any statistic to filter Byzantines. Instead, we control the impact of Byzantine nodes by selecting a batch size appropriately. Importantly, \citet{Alistarh_NIPS_2018} considered only convex objectives.	Next, we provide the guarantees for the algorithm.
	\section{Convergence Guarantees}
	\label{Sec: Conv}
	Let us denote by $\mathbb{E}G_{\text{comp, SN}}(\epsilon)$ the expected number of total gradient computations required at the SN (same number of computations are required at individual WNs) to reach an $\epsilon$-stationary point. By Algorithm \ref{alg1} we have: 
	$$\mathbb{E}G_{\text{comp, SN}}(\epsilon) = \sum_{t = 1}^T (B + \mathbb{E}[N_t]) = 2 T B~~\text{with}~~N_t \sim \text{Geom}\left(\frac{B}{B + 1}\right).$$
	where $T$ is the number of iterations required to reach an $\epsilon$-stationary point (see Output of Algorithm \ref{alg1}). Next, we state the convergence result.
	\begin{theorem}
		\label{Thm:Convergence_BSVRG}
		If Assumptions \ref{Ass: LipCont} and \ref{Ass: BoundedGradVar} are satisfied, for step size $\eta_t = \eta = \frac{1}{3 L B^{{2}/{3}}}$ and $B \geq 16$ such that we have $\delta \in (0,1)$ such that: $\mathrm{e}^{\frac{\delta B}{2(1 - 2 \delta)}} \leq \frac{2K}{\delta} \leq \mathrm{e}^{\frac{B}{2}}~~\text{and}~~ \delta \leq \frac{1}{25 K B}$
		then we have:
		\begin{align*}
			\mathbb{E} \| \nabla f(\tilde{x}_a) \|^2  & \leq   \underbrace{ \frac{ 12 L \mathbb{E}[ f(\tilde{x}_{0}) -   f(\tilde{x}^\ast) ]}{T B^{{1}/{3}}} }_{T = O \left(  \frac{1}{\epsilon B^{{1}/{3}}}  \right)}     +   \underbrace{\frac{32 \mathcal{V}^2}{  (1 - \alpha)^{2} K B }}_{B = O \left(  \frac{1}{\epsilon K}  \right)}  +    \underbrace{\frac{2176 \alpha^2 \mathcal{V}^2 C}{ (1 - \alpha)^{2} B}}_{B = {O} \left(  \frac{\alpha^2}{\epsilon }  \right)}.
		\end{align*}
	\end{theorem}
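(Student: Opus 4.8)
The plan is to combine a high-probability analysis of the Byzantine filtering step (to control the quality of the aggregated gradient $\mu_t$) with the standard stochastically-controlled (SCSG) descent machinery driven by the geometric inner-loop length. The two data-dependent error terms in the bound will come entirely from the gradient-estimation error $\mathbb{E}\|\mu_t - \nabla f(x_0^t)\|^2$, while the first term will come from telescoping the function value across epochs and using that the output $\tilde{x}_a$ is drawn uniformly from $\{\tilde{x}_t\}_{t=1}^T$.

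First I would establish the gradient-estimation bound. Using Lemma \ref{lem: T_mu} together with Assumption \ref{Ass: BoundedGradVar}, I would show that on an event of probability at least $1-\delta$ the honest batch gradients concentrate so that every $k \in \mathcal{G}$ satisfies $\|\mu_t^{(k)} - \nabla f(x_0^t)\| \leq \mathfrak{T}_\mu/2$; this guarantees that all honest nodes survive the filter (so $\mathcal{G} \subseteq \mathcal{G}_t$ and $|\mathcal{G}_t| \geq (1-\alpha)K$) and that the computed median $\mu_t^{\mathrm{med}}$ lies within $O(\mathfrak{T}_\mu)$ of $\nabla f(x_0^t)$. I would then split $\mu_t - \nabla f(x_0^t)$ over the surviving honest nodes and the surviving Byzantine nodes. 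The honest part is an average of at least $(1-\alpha)K$ independent bounded-variance terms, contributing $O(\mathcal{V}^2/((1-\alpha)^2 KB))$; each surviving Byzantine lies within $2\mathfrak{T}_\mu$ of the median (or $4\mathcal{V}$ after the fallback rule), and since at most $\alpha K$ of them survive, the injected bias is $O((\alpha/(1-\alpha))\mathfrak{T}_\mu)$, contributing $O(\alpha^2 \mathcal{V}^2 C/((1-\alpha)^2 B))$ after squaring and substituting $\mathfrak{T}_\mu = 2\mathcal{V}\sqrt{C/B}$. The exponential conditions on $\delta$ supply exactly this concentration, and the condition $\delta \leq 1/(25KB)$ ensures that the complementary low-probability event, where a surviving Byzantine is only controlled by the coarse $4\mathcal{V}$ fallback, contributes negligibly to the full expectation. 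This yields $\mathbb{E}\|\mu_t - \nabla f(x_0^t)\|^2 \leq \frac{c_1 \mathcal{V}^2}{(1-\alpha)^2 KB} + \frac{c_2 \alpha^2 \mathcal{V}^2 C}{(1-\alpha)^2 B}$.

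Next I would run the SCSG per-epoch descent. Writing $e_t = \mu_t - \nabla f(x_0^t)$, the inner estimator satisfies $\mathbb{E}[v_{n-1}^t \mid \mathcal{F}_{n-1}] = \nabla f(x_{n-1}^t) + e_t$, so it is biased by exactly $e_t$; applying Assumption \ref{Ass: LipCont} (descent lemma) and the SVRG variance bound $\mathbb{E}\|v_{n-1}^t - \mathbb{E}[v_{n-1}^t]\|^2 \leq L^2 \|x_{n-1}^t - x_0^t\|^2$ gives a one-step inequality in which the cross term is handled by Young's inequality and the $L^2\|x_{n-1}^t - x_0^t\|^2$ drift is absorbed using the geometric identity $\mathbb{E}[D_{N_t+1} - D_{N_t}] = \frac{1}{B}(\mathbb{E}[D_{N_t}] - D_0)$ for $N_t \sim \mathrm{Geom}(B/(B+1))$, applied to $D_n = f(x_n^t)$ and to $D_n = \|x_n^t - x_0^t\|^2$. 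With $\eta = 1/(3LB^{2/3})$ and $B \geq 16$ the coefficients close up to give a clean per-epoch bound of the form $\mathbb{E}\|\nabla f(\tilde{x}_t)\|^2 \leq \frac{c_3 L}{B^{1/3}}\,\mathbb{E}[f(\tilde{x}_{t-1}) - f(\tilde{x}_t)] + c_4\,\mathbb{E}\|e_t\|^2$. Summing over $t=1,\dots,T$, telescoping the function values into $\mathbb{E}[f(\tilde{x}_0) - f(\tilde{x}^\ast)]$, dividing by $T$, and using the uniform sampling of $\tilde{x}_a$ converts the left side into $\mathbb{E}\|\nabla f(\tilde{x}_a)\|^2$ and produces the three stated terms once the step-one bound is substituted.

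I expect the main obstacle to be step one: controlling $\mathbb{E}\|\mu_t - \nabla f(x_0^t)\|^2$ in full expectation rather than merely on the high-probability event. The difficulty is that the filtered set $\mathcal{G}_t$ is itself random and adversarially influenced, so the honest average is taken over a data-dependent index set; the argument must show that the good concentration event forces $\mathcal{G}_t$ to contain all honest nodes (decoupling the honest average from the adversary), and must separately verify that the rare bad event, bounded only through the fallback threshold $4\mathcal{V}$, is suppressed by the choice $\delta \leq 1/(25KB)$ so that it does not dominate the $1/B$-scale error. Tracking the exact constants, and in particular the $(1-\alpha)^{-2}$ and $C$ factors, through this case analysis, and verifying the step-size and batch-size conditions that let the SCSG coefficients close, is the most delicate part; the remaining descent is standard.
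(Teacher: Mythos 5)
Your proposal is correct and follows essentially the same route as the paper: a conditional (high-probability event vs.\ rare complement) analysis of $\mathbb{E}\|e_t\|^2$ splitting surviving honest nodes from surviving Byzantines, combined with the SCSG descent machinery using the geometric inner-loop identity applied to $f(x_n^t)$ and $\|x_n^t-x_0^t\|^2$, Young's inequality for the cross terms, and telescoping over epochs. The only minor bookkeeping difference is that the paper uses the exponential condition $\mathrm{e}^{\delta B/(2(1-2\delta))}\le 2K/\delta$ to suppress the fallback-rule event and reserves $\delta\le 1/(25KB)$ for the event where an honest node is filtered out, whereas you fold both suppressions together; the substance is identical.
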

	Using the above, $\mathbb{E}G_{\text{comp, SN}}(\epsilon)$ can be computed as:
	\begin{corollary}
		Under the assumptions stated in Theorem \ref{Thm:Convergence_BSVRG}:\\
		(i): We have: $\mathbb{E} G_{\text{comp, SN}}(\epsilon)  \leq \tilde{O}\left(\frac{1}{\epsilon^{{5}/{3}}  K^{{2}/{3}}} +  \frac{\alpha^{{4}/{3}}}{ \epsilon^{{5}/{3}} }\right)$ where $\tilde{O}(\cdot)$ hides the $\log$ factors.\\
		(ii) Moreover, when $\alpha = 0$ we have: $\mathbb{E} G_{\text{comp, SN}}(\epsilon)  \leq  O \left(\frac{1}{\epsilon^{{5}/{3}}  K^{{2}/{3}}} \right)$.
	\end{corollary}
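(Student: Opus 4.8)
The plan is to turn the per-epoch error bound of Theorem~\ref{Thm:Convergence_BSVRG} into the stated gradient-complexity bound by choosing the batch size $B$ and the number of epochs $T$ so that each of the three terms on the right-hand side is at most of order $\epsilon$, and then substituting these choices into the identity $\mathbb{E}G_{\text{comp, SN}}(\epsilon) = 2TB$ established just above the theorem. The underbraces in the theorem already record the target scalings, so the task reduces to (a) picking $B$ and $T$ consistently with the theorem's constraints and (b) carrying out the algebra that produces the $\epsilon^{-5/3}$ exponent.

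First I would read off the requirements on $B$. The second term $\frac{32\mathcal{V}^2}{(1-\alpha)^2 KB}$ is $O(\epsilon)$ as soon as $B = \Omega\!\big(\frac{1}{(1-\alpha)^2\epsilon K}\big)$, while the third term $\frac{2176\alpha^2\mathcal{V}^2 C}{(1-\alpha)^2 B}$ is $O(\epsilon)$ once $B = \Omega\!\big(\frac{\alpha^2 C}{(1-\alpha)^2\epsilon}\big)$. Since $\alpha \in [0,1/2)$ the factor $(1-\alpha)^{-2}$ is an absolute constant and can be dropped. I would therefore set
\[
B = \Theta\!\left(\frac{1}{\epsilon K} + \frac{\alpha^2 C}{\epsilon}\right),
\]
which simultaneously controls both terms. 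With $B$ fixed, the first term $\frac{12L\,\mathbb{E}[f(\tilde{x}_{0})-f(\tilde{x}^\ast)]}{TB^{1/3}}$ is $O(\epsilon)$ provided $T = \Theta\!\big(\frac{1}{\epsilon B^{1/3}}\big)$, which yields an $\epsilon$-stationary point.

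Then I would substitute into $\mathbb{E}G_{\text{comp, SN}}(\epsilon) = 2TB$, which with the above $T$ collapses to $2TB = \Theta\!\big(B^{2/3}/\epsilon\big)$; this is the step where the deterministic batch cost and the geometric inner-loop cost (each equal to $B$ in expectation) combine. The remaining work is to expand $B^{2/3}$ using the subadditivity inequality $(a+b)^{2/3}\le a^{2/3}+b^{2/3}$, valid for $a,b\ge 0$ because $x\mapsto x^{2/3}$ is concave and vanishes at the origin. This gives
\[
\frac{B^{2/3}}{\epsilon} \;\le\; \frac{1}{\epsilon^{5/3}K^{2/3}} + \frac{\alpha^{4/3}C^{2/3}}{\epsilon^{5/3}},
\]
and absorbing the logarithmic factor $C^{2/3}=(2\log(2K/\delta))^{2/3}$ into $\tilde{O}(\cdot)$ yields part (i). For part (ii), setting $\alpha=0$ removes the third term together with its $C$-dependence, so the bound reduces to $\Theta(1/(\epsilon^{5/3}K^{2/3}))$ with no logarithmic factor, i.e.\ a clean $O(\cdot)$.

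The main obstacle I anticipate is not the algebra but checking that the chosen $B$ is admissible under the theorem's hypotheses, namely $B\ge 16$ and the existence of a $\delta\in(0,1)$ satisfying $\mathrm{e}^{\delta B/(2(1-2\delta))}\le \frac{2K}{\delta}\le \mathrm{e}^{B/2}$ together with $\delta\le \frac{1}{25KB}$. There is a mild circularity here: $C=2\log(2K/\delta)$ enters the choice of $B$, while the admissible $\delta$ (of order $1/(KB)$) depends on $B$. Since this coupling is only logarithmic, I would close it by taking $\delta=\Theta(1/(KB))$, verifying that the sandwich then forces $C\le B$ (automatic once $B$ is large) and $C=O(\log(KB))=O(\log(K/\epsilon))$, so that $C=\tilde{O}(1)$ self-consistently and $B$ stays at the claimed order up to logarithmic factors. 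Confirming this feasibility is the only genuinely delicate bookkeeping in the argument.
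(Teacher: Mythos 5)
Your proposal is correct and takes essentially the same route as the paper: the paper likewise sets $B_K = O\!\left(\frac{1}{\epsilon K}\right)$ and $B_\alpha = O\!\left(\frac{\alpha^2}{\epsilon}\right)$ to kill the second and third terms, takes $T = O\!\left(\frac{1}{\epsilon B^{1/3}}\right)$ for the first, and bounds $\mathbb{E}G_{\text{comp, SN}}(\epsilon) \leq T B_K + T B_\alpha$, which matches your $2TB = \Theta\!\left(B^{2/3}/\epsilon\right)$ computation up to constants. Your additions --- making the subadditivity step $(a+b)^{2/3} \leq a^{2/3} + b^{2/3}$ explicit and verifying that $\delta = \Theta(1/(KB))$ closes the logarithmic circularity in $C$ so the chosen $B$ is admissible under the theorem's hypotheses --- are details the paper leaves implicit, not a different argument.
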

	Note that for the case when $\alpha = 0$ and $k \leq 1/\epsilon$, our algorithm improves upon the best known convergence rates of $O\big(\frac{1}{\epsilon^2 K} \big)$ for distributed stochastic non-convex optimization given in \citet{Hao_ICML_2019,Jiang_NIPS_2018}. Moreover, if Algorithm \ref{alg1} is run only at the SN, we achieve $\mathbb{E} G_{\text{comp, SN}}(\epsilon)  \leq  O \left(\frac{1}{\epsilon^{{5}/{3}}} \right)$ which is the same as is computed in \citet{Lei_Jordan_SCSG}. 
	\section{Conclusion}
	In this work, we proposed the first non-convex SVRG like algorithm which considers distributed optimization in the presence of Byzantine nodes. We proposed a novel aggregation rule which is independent of the problem dimension, $d$. Importantly, we captured the effect of Byzantine nodes in the network and showed that the proposed algorithm outperforms the best known convergence rates known in the literature in the presence of Byzantine nodes.
	\bibliography{refs}
	\newpage
	\clearpage
	\appendix
	\section{}
	For the purpose of the proof, we consider a more general model where the nodes can choose different batch sizes, $B_t$, for $t = 1,2, \ldots, T$, across epochs. For varying batch sizes in Algorithm \ref{alg1} the Byzantine filtering constant $\mathfrak{T}_\mu$ will be evaluated inside the epochs and will be a function of $B_t$.  Rest of the algorithm will stay the same. Our proof follows the structure similar to the one in \citet{Lei_Jordan_SCSG} with a few major differences. The problem considered in  \citet{Lei_Jordan_SCSG} was a finite sum problem with all the gradient computations designated at the central node. The key novelty in our proof lies in proving the boundedness of the norm square of the error term $e_t$ (see Lemma \ref{lem: e_t}).    \vspace{0.1 in}\\
	{\bf Proof of Theorem \ref{Thm:Convergence_BSVRG}:} Using the smoothness of function $f$ we have:
	\begin{align}
		\mathbb{E}_{\xi_n^t}  f(x_{n + 1}^t)  & \leq f(x_n^t) - \eta_t \langle  \mathbb{E}_{\xi_n^t} v_n^t ,  \nabla f(x_n^t)   \rangle + \frac{L \eta_t^2}{2} \mathbb{E}_{\xi_n^t} \| v_n^t \|^2 \nonumber\\
		& \overset{(a)}{\leq}   f(x_n^t) - \eta_t (1 - L\eta_t) \| \nabla f(x_n^t) \|^2  \nonumber\\
		&  \qquad \qquad \qquad -  \eta_t      \langle  e_t  ,  \nabla f(x_n^t)   \rangle + \frac{L^3 \eta_t^2}{2} \| x_n^t - x_0^t \|^2 + L \eta_t^2 \| e_t \|^2
		\label{eq: For_Finite_T1}
	\end{align}
	where $(a)$ follows from Lemma \ref{lem: Norm_v}. Denoting by $\mathbb{E}_t$ the expectation w.r.t. all $\xi_1^t, \xi_2^t, \ldots$ given $N_t$. Since  $\xi_1^t, \xi_2^t, \ldots$  are independent of $N_t$, $\mathbb{E}_t$ is equivalent to expectation w.r.t. $\xi_1^t, \xi_2^t, \ldots$. We have from above
	\begin{align*}
		\mathbb{E}_{t}  f(x_{n + 1}^t)  & \leq   \mathbb{E}_{t}  f(x_n^t) - \eta_t (1 - L\eta_t)  \mathbb{E}_{t} \| \nabla f(x_n^t) \|^2  \\
		& \qquad \qquad -  \eta_t   \mathbb{E}_{t}    \langle  e_t  ,  \nabla f(x_n^t)   \rangle + \frac{L^3 \eta_t^2}{2} \mathbb{E}_{t} \| x_n^t - x_0^t \|^2 + L \eta_t^2 \| e_t \|^2
	\end{align*}
	Now taking $n = N_t$ and denoting by $\mathbb{E}_{N_t}$ expectation w.r.t. $N_t$ we get: 
	\begin{align*}
		\mathbb{E}_{N_t} 	\mathbb{E}_{t}  f(x_{N_t + 1}^t)  & \leq  	\mathbb{E}_{N_t}   \mathbb{E}_{t}  f(x_{N_t}^t) - \eta_t (1 - L\eta_t)  \mathbb{E}_{N_t} \mathbb{E}_{t} \| \nabla f(x_{N_t}^t) \|^2  \\
		& \qquad \qquad -  \eta_t   	\mathbb{E}_{N_t}   \mathbb{E}_{t}    \langle  e_t  ,  \nabla f(x_{N_t}^t)   \rangle + \frac{L^3 \eta_t^2}{2}  	\mathbb{E}_{N_t}  \mathbb{E}_{t} \| x_{N_t}^t - x_0^t \|^2 + L \eta_t^2 \| e_t \|^2
	\end{align*}
	Using Fubini's theorem and rearranging the terms we have
	\begin{align*}
		\eta_t (1 - L\eta_t) \mathbb{E}_{N_t} \mathbb{E}_{t} \| \nabla f(x_{N_t}^t) \|^2   & \leq  	  \mathbb{E}_{N_t}  \mathbb{E}_{t}  f(x_{N_t}^t)  - 	 \mathbb{E}_{N_t} \mathbb{E}_{t}  f(x_{N_t + 1}^t) \\
		& \quad  -  \eta_t   	\mathbb{E}_{N_t}   \mathbb{E}_{t}    \langle  e_t  ,  \nabla f(x_{N_t}^t)   \rangle + \frac{L^3 \eta_t^2}{2}  	\mathbb{E}_{N_t}  \mathbb{E}_{t} \| x_{N_t}^t - x_0^t \|^2 + L \eta_t^2 \| e_t \|^2 \\
		& \overset{(a)}{=} \frac{1}{B_t} [ f(x_0^t) -  \mathbb{E}_t \mathbb{E}_{N_t} f(x_{N_t}^t) ] \\
		& \quad  -  \eta_t   	\mathbb{E}_{N_t}   \mathbb{E}_{t}    \langle  e_t  ,  \nabla f(x_{N_t}^t)   \rangle + \frac{L^3 \eta_t^2}{2}  	\mathbb{E}_{N_t}  \mathbb{E}_{t} \| x_{N_t}^t - x_0^t \|^2 + L \eta_t^2 \| e_t \|^2 
	\end{align*}
	where $(a)$ follows from Lemma \ref{lem: GeomRV}, Lemma \ref{lem: Finite} and Fubini's theorem. Now taking expectation over all the randomness and using the fact that $x_{N_t}^t = \tilde{x}_t$ and $\tilde{x}_{t - 1} = x_0^t$  we get
	\begin{align*}
		\eta_t (1 - L\eta_t) \mathbb{E} \| \nabla f(\tilde{x}_t) \|^2   &  \leq  \frac{1}{B_t} \mathbb{E}[ f(\tilde{x}_{t- 1}) -   f(\tilde{x}_t) ]   -  \eta_t   	\mathbb{E}    \langle  e_t  ,  \nabla f(\tilde{x}_{t})   \rangle  \\
		& \qquad \qquad\qquad
		+ \frac{L^3 \eta_t^2}{2}  	\mathbb{E} \| \tilde{x}_{t} - \tilde{x}_{t- 1} \|^2 + L \eta_t^2 \mathbb{E} \| e_t \|^2   \\
		& \overset{(a)}{=}    \frac{1}{B_t} \mathbb{E}[ f(\tilde{x}_{t- 1}) -   f(\tilde{x}_t) ]  + \frac{1}{B_t} \mathbb{E} \langle  e_t, \tilde{x}_{t} - \tilde{x}_{t-1}  \rangle  \\
		& \quad \qquad\qquad +  \frac{L^3 \eta_t^2}{2}  	\mathbb{E} \| \tilde{x}_{t} - \tilde{x}_{t- 1} \|^2 +    \eta_t (1 + L \eta_t) \mathbb{E} \| e_t \|^2 \\
		&  \overset{(b)}{\leq}    \frac{1}{B_t} \mathbb{E}[ f(\tilde{x}_{t- 1}) -   f(\tilde{x}_t) ]  + \left( \frac{1}{2 \eta_t B_t} \left( - \frac{1}{B_t} + \eta_t^2 L^2 \right) \right) \mathbb{E} \| \tilde{x}_t - \tilde{x}_{t-1} \|^2 \\
		&  \qquad  \qquad     - \frac{1}{B_t} \mathbb{E} \langle   \nabla f(\tilde{x}_t) , \tilde{x}_t - \tilde{x}_{t-1}  \rangle + \frac{\eta_t}{B_t} \mathbb{E} \| \nabla f(\tilde{x}_{t}) \|^2 + \frac{\eta_t}{B_t} \mathbb{E} \| e_t \|^2 \\
		&  \qquad  \qquad \qquad +  \frac{L^3 \eta_t^2}{2}  	\mathbb{E} \| \tilde{x}_{t} - \tilde{x}_{t- 1} \|^2 +    \eta_t (1 + L \eta_t) \mathbb{E} \| e_t \|^2 
	\end{align*}
	where $(a)$ follows from Lemma \ref{lem: InnerProd}, $(b)$ follows from Lemma \ref{lem: Norm_Sq}. Rearranging the terms to get:
	\begin{align*}
		& \eta_t \left(   1 - L \eta_t -  \frac{1}{B_t} \right)   \mathbb{E} \| \nabla f(\tilde{x}_t) \|^2   +  \left(  \frac{1 - \eta_t^2 L^2 B_t - \eta_t^3 L^3  B_t^2}{2 \eta_t B_t^2}   \right)  \mathbb{E} \| \tilde{x}_t - \tilde{x}_{t-1} \|^2  \\
		& \qquad \qquad \leq \frac{1}{B_t}   \mathbb{E}[ f(\tilde{x}_{t- 1}) -   f(\tilde{x}_t) ]  + \frac{1}{B_t}  \mathbb{E} \langle   \nabla f(\tilde{x}_t) , \tilde{x}_{t-1} - \tilde{x}_{t}  \rangle  + \eta_t \left( 1 + L \eta_t + \frac{1}{B_t}  \right)  \mathbb{E}\| e_t \|^2.
	\end{align*}
	Now using the Young's inequality $\mathbb{E} \langle  a , b \rangle \leq \mathbb{E} \left[  \frac{\beta}{2} \|a \|^2 + \frac{1}{2 \beta} \| b\|^2  \right]$ on $ \mathbb{E} \langle   \nabla f(\tilde{x}_t) , \tilde{x}_{t-1} - \tilde{x}_{t}  \rangle$ with $$\beta =  \frac{1 - \eta_t^2 L^2 B_t  -  \eta_t^3 L^3 B_t^2 }{\eta_t B_t},~ ~~~a = \tilde{x}_{t-1} - \tilde{x}_t~~~~ \text{and}~~~~b = \nabla f(\tilde{x}_t).$$ 
	we get
	\begin{align}
		\label{eq: Simplify_GradIneq}
		& \eta_t \left(   1 - L \eta_t -  \frac{1}{B_t}   -  \frac{1}{2 (1 - \eta_t^2 L^2 B_t  -  \eta_t^3 L^3 B_t^2) }  \right)   \mathbb{E} \| \nabla f(\tilde{x}_t) \|^2    \nonumber \\
		& \qquad \qquad    \leq \frac{1}{B_t}   \mathbb{E}[ f(\tilde{x}_{t- 1}) -   f(\tilde{x}_t) ]  + \eta_t \left( 1 + L \eta_t + \frac{1}{B_t}  \right)  \mathbb{E}\| e_t \|^2 \nonumber  \\
		& \eta_t \left(   1 - L \eta_t -  \frac{1}{B_t}   -  \frac{1}{2 (1 - \eta_t^2 L^2 B_t  -  \eta_t^3 L^3 B_t^2) }  \right)   \mathbb{E} \| \nabla f(\tilde{x}_t) \|^2    \nonumber  \\
		&   \qquad \qquad  \overset{(a)}{\leq} \frac{1}{B_t}   \mathbb{E}[ f(\tilde{x}_{t- 1}) -   f(\tilde{x}_t) ]  + \eta_t \left( 1 + L \eta_t + \frac{1}{B_t}  \right) \left( \frac{4 \mathcal{V}^2}{(1 - \alpha)^2 K B_t} + \frac{272 \alpha^2  \mathcal{V}^2 C}{(1 - \alpha)^2 B_t} \right).
	\end{align}
	where $(a)$ follows from Lemma \ref{lem: e_t}. 
	
	Choosing $\eta_t$ such that we have $1 - \eta_t^2 L^2 B_t  -  \eta_t^3 L^3 B_t^2 > 0$, this implies that we have 
	$$\frac{1}{2 (1 - \eta_t^2 L^2 B_t  -  \eta_t^3 L^3 B_t^2) }  > \frac{1}{2}.$$
	Further we choose $\eta_t$ such that we have: 
	\begin{align*}
		1 - L \eta_t -  \frac{1}{B_t}   -  \frac{1}{2 (1 - \eta_t^2 L^2 B_t  -  \eta_t^3 L^3 B_t^2) }  \geq \frac{1}{4} \\
		L \eta_t +  \frac{1}{B_t}   +  \frac{1}{2 (1 - \eta_t^2 L^2 B_t  -  \eta_t^3 L^3 B_t^2) }
		\leq \frac{3}{4}.
	\end{align*}
	Choosing $\eta_t$ such that we can ensure:
	$$\textit{(i)}:~ \frac{1}{2} < \frac{1}{2 (1 - \eta_t^2 L^2 B_t  -  \eta_t^3 L^3 B_t^2) }  \leq \frac{5}{8},~~\textit{(ii)}:~ L \eta_t \leq \frac{1}{16}~~~~\text{and}~~\textit{(iii)}:~ \frac{1}{B_t} \leq \frac{1}{16}. $$
	Condition \textit{(i)} above implies:
	$$ \eta_t^2 L^2 B_t  +  \eta_t^3 L^3 B_t^2  \leq \frac{1}{5}.$$
	Further ensuring $\eta_t$ such that 
	$$ \eta_t^2 L^2 B_t \leq \frac{1}{10}~~~\text{and}~~~  \eta_t^3 L^3 B_t^2 \leq \frac{1}{10}.$$
	This implies that 
	$$\eta_t \leq \frac{1}{10^{{1}/{2}} L B_t^{{1}/{2}}}~~\text{and}~~ \eta_t \leq \frac{1}{10^{{1}/{3}} L B_t^{{2}/{3}}}.$$ 
	Furthermore from conditions \textit{(ii)} and \textit{(iii)} above we get: 
	$$ \eta_t \leq \frac{1}{16L} ~~~\text{and}~~~B_t \geq 16.$$
	The above discussion implies that we must have $B_t \geq 16$ and we can choose $\eta_t \leq \frac{1}{3 L B_t^{{2}/{3}}}$ as we have 
	$$\frac{1}{3 L B_t^{{2}/{3}}}  \leq  \min \left\{\frac{1}{16L} , \frac{1}{10^{{1}/{3}} L B_t^{{2}/{3}}},  \frac{1}{10^{{1}/{2}} L B_t^{{1}/{2}}} \right\}~\text{for}~B_t \geq 16.$$
	This choice of $\eta_t$ ensures that the term: 
	\begin{align}
		\label{eq: LHS_Term}
		1 - L \eta_t -  \frac{1}{B_t}   -  \frac{1}{2 (1 - \eta_t^2 L^2 B_t  -  \eta_t^3 L^3 B_t^2) }  \geq \frac{1}{4}. 
	\end{align}
	Now replacing $\eta_t$ and $B_t$ in the term:
	\begin{align}
		\label{eq: RHS_Term}
		1 +  \eta_t L + \frac{1}{B_t}  \leq 1 + \frac{1}{3 B_t^{{2}/{3}}} + \frac{1}{B_t} \leq 2.
	\end{align}
	Now replacing \eqref{eq: LHS_Term} and \eqref{eq: RHS_Term} in \eqref{eq: Simplify_GradIneq}, and replacing $\eta_t = \frac{1}{3 L B_t^{{2}/{3}}}$ we get
	\begin{align*}
		\frac{\eta_t}{4}     \mathbb{E} \| \nabla f(\tilde{x}_t) \|^2       & \leq \frac{1}{B_t}   \mathbb{E}[ f(\tilde{x}_{t- 1}) -   f(\tilde{x}_t) ]  + 2 \eta_t\left(	\frac{4 \mathcal{V}^2}{(1 - \alpha)^2 K B_t} + \frac{272 \alpha^2 \mathcal{V}^2 C}{(1 - \alpha)^2 B_t} \right) \\
		\mathbb{E} \| \nabla f(\tilde{x}_t) \|^2  &  \leq  \frac{12 L  \mathbb{E}[ f(\tilde{x}_{t- 1}) -   f(\tilde{x}_t) ]}{B_t^{{1}/{3}}}  +   	\frac{32 \mathcal{V}^2}{(1 - \alpha)^2 K B_t } + \frac{2176 \alpha^2 \mathcal{V}^2 C}{(1 - \alpha)^2 B_t }  .
	\end{align*}
	{\bf Constant batch size:} For $B_t = B$ and summing over $t = 1,2, \ldots, T$ and choosing $x_a$ using Algorithm \ref{alg1} we get:
	\begin{align*}
		& \mathbb{E} \| \nabla f(\tilde{x}_a) \|^2  \\
		&  \leq    \frac{ 12 L \mathbb{E}[ f(\tilde{x}_{0}) -   f(\tilde{x}^\ast) ]      +   32 \mathcal{V}^2 (1 - \alpha)^{-2} K^{-1} \sum_{t = 1}^T B_t^{{-2}/{3}}  + 2176 \alpha^2 \mathcal{V}^2 C (1 - \alpha)^{-2} \sum_{t = 1}^T B_t^{{-2}/{3}}    }{\sum_{t = 1}^T B_t^{{1}/{3}}} .
	\end{align*}  
	Replace $B_t = B$ we get:
	\begin{align*}
		\mathbb{E} \| \nabla f(\tilde{x}_a) \|^2  & \leq   \underbrace{ \frac{ 12 L \mathbb{E}[ f(\tilde{x}_{0}) -   f(\tilde{x}^\ast) ]}{T B^{{1}/{3}}} }_{T = O \left(  \frac{1}{\epsilon B^{{1}/{3}}}  \right)}     +   \underbrace{\frac{32 \mathcal{V}^2}{  (1 - \alpha)^{2} K B }}_{B = O \left(  \frac{1}{\epsilon K}  \right)}  +    \underbrace{\frac{2176 \alpha^2 \mathcal{V}^2 C}{ (1 - \alpha)^{2} B}}_{B = {O} \left(  \frac{\alpha^2}{\epsilon }  \right)}.
	\end{align*}
	Now to guarantee that we get an $\epsilon$-accurate solution we need: 
	$T = O \left(  \frac{1}{\epsilon B^{{1}/{3}}}  \right)$ number of iterations for the first term. For the second term, we need $B_K = O \left(  \frac{1}{\epsilon K}  \right)$ batch size and for the third term, we need $B_\alpha = {O} \left(  \frac{\alpha^2}{\epsilon }  \right)$ batch size to account for the Byzantine workers. 
	This implies that the total number of gradient computations, $\mathbb{E}G_{\text{comp, SN}}$ required at the SN (and at the individual WNs) on an average are of the order of: 
	\begin{align*}
		\mathbb{E} G_{\text{comp, SN}}(\epsilon) & \leq T B_K + T B_\alpha \\
		\mathbb{E} G_{\text{comp, SN}}(\epsilon) & \leq O\left(\frac{1}{\epsilon^{{5}/{3}}  K^{{2}/{3}}} +  \frac{\alpha^{{4}/{3}}}{ \epsilon^{{5}/{3}} }\right).
	\end{align*}
	And the expected number of gradient computations across the network denoted by, $\mathbb{E}G_{\text{comp, NW}}$,  are of the order of:
	\begin{align*}
		\mathbb{E} G_{\text{comp, NW}}(\epsilon) & \leq O\left(\frac{K^{1/3}}{\epsilon^{{5}/{3}} } +  \frac{K \alpha^{{4}/{3}}}{ \epsilon^{{5}/{3}} }\right).
	\end{align*}
	
	Moreover, note that if we have $\alpha = 0$ we get the expected computational complexity and the expected number of gradient computations across the network are of the order of:  
	\begin{align*}
		\mathbb{E} G_{\text{comp, SN}}(\epsilon)  \leq O\left(\frac{1}{\epsilon^{{5}/{3}}  K^{{2}/{3}}}\right)\quad \text{and}\quad \mathbb{E} G_{\text{comp, NW}}(\epsilon)  \leq O\left(\frac{K^{1/3}}{\epsilon^{{5}/{3}} }\right)    . 
	\end{align*}\hfill	$\blacksquare$

	\begin{lemma} \label{lem: Norm_v}
		We have
		$$\mathbb{E}_{\xi_n^t} \| v_n^t \|^2   \leq L^2 \| x_n^t - x_0^t \|^2  + 2 \| \nabla f(x_n^t) \|^2 + 2 \| e_t \|^2.$$
	\end{lemma}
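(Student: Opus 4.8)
The plan is to treat the estimator $v_n^t = \nabla f(x_n^t;\xi_n^t) - \nabla f(x_0^t;\xi_n^t) + \mu_t$ with a standard bias--variance decomposition, exploiting that the anchor $x_0^t$ and the aggregated gradient $\mu_t$ are frozen at the start of the epoch and are therefore constant under the inner-loop expectation $\mathbb{E}_{\xi_n^t}$. Writing the aggregation error as $e_t := \mu_t - \nabla f(x_0^t)$, the first step is to compute the conditional mean: since $\mathbb{E}_{\xi_n^t}\nabla f(x_n^t;\xi_n^t) = \nabla f(x_n^t)$ and likewise at $x_0^t$, we obtain $\mathbb{E}_{\xi_n^t} v_n^t = \nabla f(x_n^t) - \nabla f(x_0^t) + \mu_t = \nabla f(x_n^t) + e_t$.

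Next I would apply the identity $\mathbb{E}_{\xi_n^t}\|v_n^t\|^2 = \|\mathbb{E}_{\xi_n^t} v_n^t\|^2 + \mathbb{E}_{\xi_n^t}\|v_n^t - \mathbb{E}_{\xi_n^t} v_n^t\|^2$. The squared-mean term is dispatched by the elementary inequality $\|a+b\|^2 \le 2\|a\|^2 + 2\|b\|^2$ applied to $\nabla f(x_n^t) + e_t$, which yields the $2\|\nabla f(x_n^t)\|^2 + 2\|e_t\|^2$ part of the claimed bound.

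For the variance term, the key observation is that the deterministic pieces drop out: $v_n^t - \mathbb{E}_{\xi_n^t} v_n^t = Y - \mathbb{E}_{\xi_n^t} Y$ where $Y := \nabla f(x_n^t;\xi_n^t) - \nabla f(x_0^t;\xi_n^t)$, so $\mu_t$ and $e_t$ contribute nothing. Since centering cannot increase the second moment, $\mathbb{E}_{\xi_n^t}\|Y - \mathbb{E}_{\xi_n^t} Y\|^2 \le \mathbb{E}_{\xi_n^t}\|Y\|^2$, and then the per-sample $L$-smoothness of Assumption~\ref{Ass: LipCont} gives $\|\nabla f(x_n^t;\xi) - \nabla f(x_0^t;\xi)\| \le L\|x_n^t - x_0^t\|$ for every realization $\xi$, bounding this term by $L^2\|x_n^t - x_0^t\|^2$. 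Summing the two contributions produces the stated inequality.

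I do not anticipate a real obstacle: the estimate is the standard SVRG/SCSG variance bound adapted to an inexact anchor. The only point needing care is recognizing that the Byzantine-induced error $e_t$ is measurable with respect to the information available before the inner loop, hence constant under $\mathbb{E}_{\xi_n^t}$; this is precisely what isolates the Lipschitz bound to the stochastic difference $Y$ and keeps $e_t$ out of the variance, relegating its effect to the bias term where it is later controlled via Lemma~\ref{lem: e_t}.
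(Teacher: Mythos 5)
Your proposal is correct and follows essentially the same route as the paper's proof: the conditional bias--variance identity $\mathbb{E}\|Z\|^2 = \|\mathbb{E}Z\|^2 + \mathbb{E}\|Z-\mathbb{E}Z\|^2$, the bound $\|\nabla f(x_n^t)+e_t\|^2 \le 2\|\nabla f(x_n^t)\|^2 + 2\|e_t\|^2$, the observation that centering does not increase the second moment, and the per-sample gradient Lipschitz bound on $\nabla f(x_n^t;\xi) - \nabla f(x_0^t;\xi)$. Your remark that $\mu_t$ (hence $e_t$) is constant under $\mathbb{E}_{\xi_n^t}$ and therefore drops out of the variance term is exactly the implicit step in the paper's display.
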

	\begin{proof}
		From the definition of $v_n^t$ we have:
		$$ v_n^t = \nabla f(x_n^t; \xi_n^t) - \nabla f(x_0^t; \xi_n^t) + \mu_t .$$
		where $\mu_t = \frac{1}{|\mathcal{G}_t|}  \sum_{k \in \mathcal{G}_t}  \mu_t^{(k)}$. We define 
		$$e_t = \mu_t - \nabla f(x_0^t) =  \mu_t - \nabla f(\tilde{x}_{t-1}).$$
		This implies that we have 
		$$\mathbb{E}_{\xi_n^t} v_n^t = \nabla f(x_n^t) + e_t.$$
		Now taking $\mathbb{E}_{\xi_n^t} \| v_n^t \|^2$ and using $\mathbb{E} \| Z\|^2 = \mathbb{E} \| Z - \mathbb{E}Z \|^2 + \| \mathbb{E}Z \|^2$, we have
		\begin{align*}
			\mathbb{E}_{\xi_n^t} \| v_n^t \|^2 & =  \mathbb{E}_{\xi_n^t}   \| v_n^t -  \mathbb{E}_{\xi_n^t} v_n^t\|^2 + \| \mathbb{E}_{\xi_n^t} v_n^t \|^2 \\
			& = \mathbb{E}_{\xi_n^t}  \|   \nabla f(x_n^t; \xi_n^t) - \nabla f(x_0^t ; \xi_n^t) - (\nabla f(x_n^t) - \nabla f(x_0^t)  )  \|^2 +  \| \nabla f(x_n^t) + e_t \|^2\\
			& \overset{(a)}{\leq} \mathbb{E}_{\xi_n^t}  \|   \nabla f(x_n^t; \xi_n^t) - \nabla f(x_0^t ; \xi_n^t) - (\nabla f(x_n^t) - \nabla f(x_0^t)  )  \|^2 +  2 \| \nabla f(x_n^t) \|^2 + 2 \|e_t \|^2 \\
			& \overset{(b)}{\leq} \mathbb{E}_{\xi_n^t}  \|   \nabla f(x_n^t; \xi_n^t) - \nabla f(x_0^t ; \xi_n^t)  \|^2 +  2 \| \nabla f(x_n^t) \|^2 + 2 \|e_t \|^2 \\
			& \overset{(c)}{\leq} L^2 \| x_n^t - x_0^t \|^2 + 2 \| \nabla f(x_n^t) \|^2 + 2 \|e_t \|^2.
		\end{align*}
		where $(a)$ follows from the definition of $v_n^t$ and Lemma \ref{lem: Norm_Ineq}, $(b)$ follows from variance inequality and $(c)$ follows from the Gradient Lipschitz continuity of $f(\cdot~; \xi_n^t)$.
	\end{proof}

	\begin{lemma} \label{lem: InnerProd} We have
		\begin{align*}
			\eta_t \mathbb{E} \langle e_t , \nabla f(\tilde{x_t})  \rangle =  \frac{1}{B_t} \mathbb{E} \langle  e_t, \tilde{x}_{t - 1} - \tilde{x}_t  \rangle  - \eta_t \mathbb{E} \| e_t \|^2 . 
		\end{align*}
	\end{lemma}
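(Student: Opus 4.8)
The plan is to exploit the fact that the error term $e_t = \mu_t - \nabla f(\tilde{x}_{t-1})$ is completely determined at the start of epoch $t$: it depends only on $\mu_t$ and $x_0^t = \tilde{x}_{t-1}$, so throughout the inner loop it behaves as a fixed vector. I would condition on the epoch-start data, treat $e_t$ as a constant, convert the inner product $\langle e_t, \nabla f(x_n^t)\rangle$ into a one-step increment of the iterates, and finally collapse the random inner-loop length $N_t$ using the geometric identity already invoked in the main proof.

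First I would recall from the proof of Lemma \ref{lem: Norm_v} that $\mathbb{E}_{\xi_n^t} v_n^t = \nabla f(x_n^t) + e_t$, so the single-step update satisfies $\mathbb{E}_{\xi_n^t}[x_{n+1}^t - x_n^t] = -\eta_t(\nabla f(x_n^t) + e_t)$, where $\mathbb{E}_{\xi_n^t}$ is conditional on the history up to $x_n^t$. Taking the inner product with the fixed vector $e_t$ yields the pointwise relation
\[
\eta_t \langle e_t, \nabla f(x_n^t)\rangle = \langle e_t, x_n^t - \mathbb{E}_{\xi_n^t} x_{n+1}^t\rangle - \eta_t \|e_t\|^2 .
\]
Next I would set $a_n = \mathbb{E}_t \langle e_t, x_n^t\rangle$, the expectation over the inner samples $\xi_1^t, \xi_2^t, \ldots$ given $N_t$, and apply the tower property to $\mathbb{E}_t \langle e_t, \mathbb{E}_{\xi_n^t} x_{n+1}^t\rangle = a_{n+1}$, obtaining the recursion $\eta_t \mathbb{E}_t\langle e_t, \nabla f(x_n^t)\rangle = a_n - a_{n+1} - \eta_t \|e_t\|^2$.

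To evaluate this at the random index $n = N_t$, I would apply the geometric random variable identity (Lemma \ref{lem: GeomRV}), exactly as in step $(a)$ of the main proof: with $D_n = a_n$ it gives $\mathbb{E}_{N_t}[a_{N_t} - a_{N_t+1}] = \frac{1}{B_t}(a_0 - \mathbb{E}_{N_t} a_{N_t})$. Substituting into the recursion and using $a_0 = \langle e_t, \tilde{x}_{t-1}\rangle$ (since $x_0^t = \tilde{x}_{t-1}$) together with $\mathbb{E}_{N_t} a_{N_t} = \mathbb{E}\langle e_t, \tilde{x}_t\rangle$ (since $x_{N_t}^t = \tilde{x}_t$) gives $\eta_t \mathbb{E}\langle e_t, \nabla f(\tilde{x}_t)\rangle = \frac{1}{B_t}\mathbb{E}\langle e_t, \tilde{x}_{t-1} - \tilde{x}_t\rangle - \eta_t\|e_t\|^2$; taking the outer expectation over the epoch-start randomness (which turns $\|e_t\|^2$ into $\mathbb{E}\|e_t\|^2$) completes the claim.

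The main obstacle I anticipate is bookkeeping the nested expectations carefully: I must justify that $e_t$ is measurable with respect to, hence constant under, both $\mathbb{E}_{\xi_n^t}$ and $\mathbb{E}_t$, that the interchange of $\mathbb{E}_t$ and $\mathbb{E}_{N_t}$ is legitimate via Fubini and the independence of $N_t$ from the inner samples, and that the geometric telescoping is applied to the correct sequence $a_n$. Getting the direction of the increment $a_n - a_{n+1}$ versus $a_{n+1} - a_n$ right, so that the $\frac{1}{B_t}$ prefactor lands on $\langle e_t, \tilde{x}_{t-1} - \tilde{x}_t\rangle$ with the correct sign, is the delicate part.
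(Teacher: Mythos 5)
Your proposal is correct and follows essentially the same route as the paper: the paper telescopes $M_n^t = \langle e_t, x_n^t - x_0^t\rangle$ using $\mathbb{E}_{\xi_n^t} v_n^t = \nabla f(x_n^t) + e_t$ and then applies Lemma \ref{lem: GeomRV} (together with Lemma \ref{lem: Finite} for the integrability hypothesis $\mathbb{E}|D_{N_t}| < \infty$, which you should cite as well), and your sequence $a_n$ differs from $M_n^t$ only by the constant $\langle e_t, x_0^t\rangle$, which cancels in the increment.
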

	\begin{proof}
		Consider the term $M_n^t = \langle  e_t , x_n^t - x_0^t  \rangle$. From the definition of $M_n^t$ we have
		\begin{align*}
			M_{n+1}^t - M_n^t &= \langle  e_t , x_{n+1}^t  - x_n^t \rangle = - \eta_t \langle  e_t ,  v_n^t  \rangle.
		\end{align*}
		Taking expectation w.r.t. $\xi_n^t$, we have
		\begin{align*}
			\mathbb{E}_{\xi_n^t} (M_{n+1}^t - M_n^t) & = - \eta_t \langle  e_t ,  	\mathbb{E}_{\xi_n^t}  v_n^t  \rangle \\
			& \overset{(a)}{=}  - \eta_t \langle  e_t ,     \nabla f(x_n^t)  \rangle - \eta_t \| e_t \|^2,
		\end{align*}
		where $(a)$ follows from the definition of $v_n^t$. Denoting by $\mathbb{E}_t$ the expectation w.r.t. all $\xi_1^t, \xi_2^t, \ldots$ given $N_t$. Since  $\xi_1^t, \xi_2^t, \ldots$  are independent of $N_t$, $\mathbb{E}_t$ is equivalent to expectation w.r.t. $\xi_1^t, \xi_2^t, \ldots$. We have  
		\begin{align*}
			\mathbb{E}_{t} (M_{n+1}^t - M_n^t) & =   - \eta_t \langle  e_t ,    \mathbb{E}_t  \nabla f(x_n^t)  \rangle - \eta_t \| e_t \|^2.
		\end{align*}
		Taking $n = N_t$ and expectation w.r.t. $N_t$ as $\mathbb{E}_{N_t}$ we have
		\begin{align*}
			\mathbb{E}_{N_t}  \mathbb{E}_{t} (M_{N_t+1}^t - M_{N_t}^t)   & =   - \eta_t \langle  e_t ,   \mathbb{E}_{N_t} \mathbb{E}_t  \nabla f(x_{N_t}^t)  \rangle - \eta_t \| e_t \|^2.
		\end{align*}
		Using Fubini's theorem, Lemma \ref{lem: GeomRV}, Lemma \ref{lem: Finite} and using the fact $x_{N_t}^t = \tilde{x}_t$ and $\tilde{x}_{t - 1} = x_0^t$,  we have
		\begin{align*}
			\frac{1}{B_t}	\mathbb{E}_{N_t}  \mathbb{E}_{t}   \langle  e_t ,  \tilde{x}_t - \tilde{x}_{t - 1}  \rangle   	& =   - \eta_t \langle  e_t ,   \mathbb{E}_{N_t} \mathbb{E}_t  \nabla f(\tilde{x}_t)  \rangle - \eta_t \| e_t \|^2.
		\end{align*}
		Taking expectation w.r.t. the whole past yields the statement of the lemma. 
	\end{proof}
	
	\begin{lemma}\label{lem: Norm_Sq}
		We have
		\begin{align*}
			2 \eta_t \mathbb{E} \langle  e_t,  \tilde{x}_t - \tilde{x}_{t - 1} \rangle  & \leq \left( -  \frac{1}{B_t}  + \eta_t^2 L^2 \right) \mathbb{E} \| \tilde{x}_t - \tilde{x}_{t - 1} \|^2    \\
			& \qquad - 2 \eta_t \mathbb{E} \langle  \nabla f(\tilde{x}_t), \tilde{x}_t - \tilde{x}_{t - 1} \rangle + 2 \eta_t^2 \mathbb{E} \| \nabla f(\tilde{x}_t) \|^2 + 2 \eta_t^2 \mathbb{E} \| e_t \|^2. 
		\end{align*}
	\end{lemma}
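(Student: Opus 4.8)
The plan is to reuse the geometric-telescoping argument from the proof of Lemma \ref{lem: InnerProd}, but apply it to the squared displacement $D_n^t := \| x_n^t - x_0^t \|^2$ instead of the linear quantity $\langle e_t, x_n^t - x_0^t\rangle$. First I would expand a single inner-loop step via the update $x_{n+1}^t = x_n^t - \eta_t v_n^t$, obtaining $D_{n+1}^t = D_n^t - 2\eta_t \langle v_n^t, x_n^t - x_0^t\rangle + \eta_t^2 \|v_n^t\|^2$. Taking the conditional expectation $\mathbb{E}_{\xi_n^t}$ and using $\mathbb{E}_{\xi_n^t} v_n^t = \nabla f(x_n^t) + e_t$ (established in the proof of Lemma \ref{lem: Norm_v}), the cross term splits into a $\nabla f(x_n^t)$ part and an $e_t$ part, so that rearranging isolates $2\eta_t \langle e_t, x_n^t - x_0^t\rangle = D_n^t - \mathbb{E}_{\xi_n^t} D_{n+1}^t - 2\eta_t\langle \nabla f(x_n^t), x_n^t - x_0^t\rangle + \eta_t^2 \mathbb{E}_{\xi_n^t}\|v_n^t\|^2$.

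Next I would control the last term with Lemma \ref{lem: Norm_v}, namely $\mathbb{E}_{\xi_n^t}\|v_n^t\|^2 \leq L^2 D_n^t + 2\|\nabla f(x_n^t)\|^2 + 2\|e_t\|^2$; this is exactly what produces the coefficient $\eta_t^2 L^2$ on the displacement term together with the $2\eta_t^2\|\nabla f\|^2$ and $2\eta_t^2\|e_t\|^2$ terms in the claimed bound. At this stage the inequality holds pointwise in $n$, conditionally on the start-of-epoch history.

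Then, following the main proof, I would take $\mathbb{E}_t$ over the inner samples, set $n = N_t$, and take $\mathbb{E}_{N_t}$, invoking Fubini and Lemma \ref{lem: Finite} for integrability. The decisive step is to apply Lemma \ref{lem: GeomRV} to the sequence $D_n^t$: since $D_0^t = \|x_0^t - x_0^t\|^2 = 0$, the boundary term vanishes and $\mathbb{E}_{N_t}[D_{N_t}^t - D_{N_t+1}^t] = -\frac{1}{B_t}\mathbb{E}_{N_t} D_{N_t}^t$, which is precisely the source of the negative $-\frac{1}{B_t}$ coefficient. Substituting $x_{N_t}^t = \tilde{x}_t$ and $x_0^t = \tilde{x}_{t-1}$, so that $D_{N_t}^t = \|\tilde{x}_t - \tilde{x}_{t-1}\|^2$, and taking expectation over the whole past yields the statement.

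The only real obstacle is the expectation bookkeeping rather than any hard estimate: one must use that $e_t$ is measurable with respect to the start-of-epoch filtration and independent of both the inner samples and $N_t$, so it factors out of $\mathbb{E}_t$ and $\mathbb{E}_{N_t}$ and the conditional $\langle e_t, \mathbb{E}[\cdot]\rangle$ equals $\mathbb{E}\langle e_t, \cdot\rangle$; and one must apply the geometric identity in the same orientation as in Lemma \ref{lem: InnerProd}, taking care that the vanishing of $D_0^t$ is what leaves only the negative displacement coefficient. Everything else is the termwise matching already described.
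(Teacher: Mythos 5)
Your proposal is correct and follows essentially the same route as the paper: expand $\|x_{n+1}^t - x_0^t\|^2$ via the update rule, bound $\mathbb{E}_{\xi_n^t}\|v_n^t\|^2$ with Lemma \ref{lem: Norm_v}, and apply Lemma \ref{lem: GeomRV} to $D_n^t = \|x_n^t - x_0^t\|^2$ with $D_0^t = 0$ to produce the $-\tfrac{1}{B_t}$ coefficient. The only difference is cosmetic ordering (you isolate the $e_t$ cross term before invoking the variance bound, the paper does the reverse), and your measurability remark about $e_t$ matches how the paper treats $\|e_t\|^2$ as fixed under $\mathbb{E}_t$ and $\mathbb{E}_{N_t}$.
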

	\begin{proof}
		We have from the update equation $x_{n+1}^t  = x_n^t - \eta_t v_n^t $, we have
		\begin{align}
			\mathbb{E}_{\xi_n^t} \| x_{n + 1}^t - x_0^t \|^2 & =  \mathbb{E}_{\xi_n^t}  \| x_n^t - \eta_t v_n^t  - x_0^t \|^2 \nonumber\\
			& =   \| x_n^t  - x_0^t  \|^2 + \eta_t^2 \mathbb{E}_{\xi_n^t}  \| v_n^t \|^2 - 2\eta_t \langle  \mathbb{E}_{\xi_n^t}   v_n^t, x_n^t - x_0^t  \rangle \nonumber\\
			& \overset{(a)}{\leq}   (1 + \eta_t^2 L^2)  \| x_n^t  - x_0^t  \|^2  -  2\eta_t \langle  \nabla f(x_n^t)   , x_n^t - x_0^t  \rangle \nonumber\\
			& \qquad \qquad \qquad  - 2 \eta_t \langle   e_t , x_n^t - x_0^t   \rangle + 2 \eta_t^2 \| \nabla f(x_n^t) \|^2  + 2 \eta_t^2 \|e_t \|^2 
			\label{eq: For_Finite_L5}
		\end{align}
		where $(a)$ follows from Lemma \ref{lem: Norm_v} and the definition of $v_n^t$. Denoting by $\mathbb{E}_t$ the expectation w.r.t. all $\xi_1^t, \xi_2^t, \ldots$ given $N_t$. Since  $\xi_1^t, \xi_2^t, \ldots$  are independent of $N_t$, $\mathbb{E}_t$ is equivalent to expectation w.r.t. $\xi_1^t, \xi_2^t, \ldots$. We have  
		\begin{align*}
			\mathbb{E}_{t} \| x_{n + 1}^t - x_0^t \|^2  & \leq  (1 + \eta_t^2 L^2) \mathbb{E}_{t}  \| x_n^t  - x_0^t  \|^2  -  2\eta_t \mathbb{E}_{t} \langle  \nabla f(x_n^t)   , x_n^t - x_0^t  \rangle \\
			& \qquad \qquad \qquad  - 2 \eta_t  \mathbb{E}_{t} \langle   e_t , x_n^t - x_0^t   \rangle + 2 \eta_t^2 \mathbb{E}_{t} \| \nabla f(x_n^t) \|^2  + 2 \eta_t^2 \|e_t \|^2 .
		\end{align*}
		Now taking $n = N_t$ and taking expectation $\mathbb{E}_{N_t}$ w.r.t. $N_t$ we have
		\begin{align*}
			2 \eta_t \mathbb{E}_{N_t}   \mathbb{E}_{t} \langle   e_t , \tilde{x}_t - \tilde{x}_{t-1}   \rangle   &  \leq   (1 + \eta_t^2 L^2)  \mathbb{E}_{N_t}  \mathbb{E}_{t}  \| x_{N_t}^t  - x_0^t  \|^2 - \mathbb{E}_{N_t} \mathbb{E}_{t} \| x_{N_t + 1}^t - x_0^t \|^2  \\
			&  \qquad \qquad -  2\eta_t \mathbb{E}_{N_t}  \mathbb{E}_{t} \langle  \nabla f(\tilde{x}_t)   , \tilde{x}_t - \tilde{x}_{t-1}  \rangle  
			+ 2 \eta_t^2  \mathbb{E}_{N_t}  \mathbb{E}_{t} \| \nabla f(\tilde{x}_t) \|^2  + 2 \eta_t^2 \|e_t \|^2 \\
			&  \overset{(a)}{=}  \left(- \frac{1}{B_t} +  \eta_t^2 L^2 \right)  \mathbb{E}_{N_t}  \mathbb{E}_{t}  \| \tilde{x}_t  - \tilde{x}_{t-1}  \|^2  \\
			&\qquad \qquad -  2\eta_t \mathbb{E}_{N_t}  \mathbb{E}_{t} \langle  \nabla f(\tilde{x}_t)   , \tilde{x}_t - \tilde{x}_{t-1}  \rangle             + 2 \eta_t^2  \mathbb{E}_{N_t}  \mathbb{E}_{t} \| \nabla f(\tilde{x}_t) \|^2  + 2 \eta_t^2 \|e_t \|^2.
		\end{align*}
		where $(a)$ follows from Lemma \ref{lem: GeomRV}, Lemma \ref{lem: Finite}  and Fubini's theorem. Finally, rearranging the terms and taking expectation w.r.t. the whole past yields the lemma. 
	\end{proof}
	
	\begin{lemma}\label{lem: e_t} 
		Choosing $\delta$ and $B_t$ in Algorithm \ref{alg1} such that the following are satisfied: 
		\begin{enumerate}[(i)]
			\item	$\mathrm{e}^{\frac{\delta B_t}{2(1 - 2 \delta)}} \leq \frac{2K}{\delta} \leq \mathrm{e}^{\frac{B_t}{2}}.$ 
			\item 	$\delta \leq \frac{1}{25 K B_t}$.
		\end{enumerate}
		then $\mathbb{E} \| e_t \|^2 $ is bounded as: 
		$$\mathbb{E} \| e_t \|^2 \leq \frac{4 \mathcal{V}^2}{(1 - \alpha)^2 K B_t} + \frac{272 \alpha^2  \mathcal{V}^2 C}{(1 - \alpha)^2 B_t}.$$
	\end{lemma}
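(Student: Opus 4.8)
The plan is to control $e_t = \mu_t - \nabla f(x_0^t)$ by conditioning on a high-probability concentration event. Write $g := \nabla f(x_0^t)$. First I would record the deterministic fact, from Assumption~\ref{Ass: BoundedGradVar}, that every honest batch gradient satisfies $\|\mu_t^{(k)} - g\| \le \frac{1}{B_t}\sum_i \|\nabla f(x_0^t;\xi_{t,i}^{(k)}) - g\| \le \mathcal{V}$ for all $k \in \mathcal{G}$, unconditionally. Next I define the good event $\mathcal{A} = \{\, \|\mu_t^{(k)} - g\| \le \tfrac{1}{2}\mathfrak{T}_\mu \ \text{for all } k \in \mathcal{G}\,\}$, where $\tfrac12\mathfrak{T}_\mu = \mathcal{V}\sqrt{C/B_t}$ is exactly the single-batch concentration radius supplied by Lemma~\ref{lem: T_mu}; a union bound over the at most $K$ honest nodes, valid under hypotheses (i)--(ii), then gives $\Pr(\mathcal{A}^c) \le \delta$. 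I split $\mathbb{E}\|e_t\|^2 = \mathbb{E}[\|e_t\|^2 \mathbf{1}_{\mathcal{A}}] + \mathbb{E}[\|e_t\|^2 \mathbf{1}_{\mathcal{A}^c}]$ and treat the two pieces separately.

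On $\mathcal{A}$ the filter behaves ideally. Since $\alpha < \tfrac12$, the honest majority is mutually within $\mathfrak{T}_\mu$, so the median rule returns a vector $\mu_t^{\mathrm{med}}$ with $\|\mu_t^{\mathrm{med}} - g\| \le \tfrac32\mathfrak{T}_\mu$; every honest node then lies within $2\mathfrak{T}_\mu$ of the median and hence survives, the fallback never fires, $|\mathcal{G}_t| \ge (1-\alpha)K$, and crucially $\mathcal{G}_t \cap \mathcal{G} = \mathcal{G}$ holds deterministically. This last identity is the device that decouples the data-dependent filtering set from the honest sum. Splitting
\begin{align*}
e_t = \frac{1}{|\mathcal{G}_t|}\sum_{k \in \mathcal{G}}\big(\mu_t^{(k)} - g\big) + \frac{1}{|\mathcal{G}_t|}\sum_{k \in \mathcal{G}_t \setminus \mathcal{G}}\big(\mu_t^{(k)} - g\big),
\end{align*}
I apply $\|a+b\|^2 \le 2\|a\|^2 + 2\|b\|^2$. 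For the honest block, $|\mathcal{G}_t| \ge |\mathcal{G}|$ on $\mathcal{A}$ lets me drop the indicator and invoke independence of the honest batch gradients (per-node variance $\le \mathcal{V}^2/B_t$), giving a contribution $\le \mathcal{V}^2/((1-\alpha)KB_t)$, which is the first term. For the adversarial block, each surviving Byzantine obeys $\|\mu_t^{(k)} - \mu_t^{\mathrm{med}}\| \le 2\mathfrak{T}_\mu$, hence $\|\mu_t^{(k)} - g\| \le \tfrac72\mathfrak{T}_\mu$; with at most $\alpha K$ such nodes and $|\mathcal{G}_t| \ge (1-\alpha)K$, its norm is at most $\tfrac{7\alpha}{2(1-\alpha)}\mathfrak{T}_\mu$, and substituting $\mathfrak{T}_\mu^2 = 4\mathcal{V}^2 C/B_t$ produces the $\alpha^2\mathcal{V}^2 C/((1-\alpha)^2 B_t)$ term.

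For $\mathcal{A}^c$ I only need a crude deterministic bound on $\|e_t\|$. In either filtering branch the chosen median is within its threshold ($\mathfrak{T}_\mu$ or $2\mathcal{V}$) of more than $K/2$ workers, of which at least one is honest because $\alpha < \tfrac12$; combining this with the a.s.\ bound $\|\mu_t^{(k)} - g\| \le \mathcal{V}$ and with $\mathfrak{T}_\mu \le 2\mathcal{V}$ (which follows from $C \le B_t$, itself a consequence of hypothesis (i)) shows that $\|\mu_t^{\mathrm{med}} - g\|$, and hence $\|\mu_t - g\|$, are bounded by an absolute multiple of $\mathcal{V}$, say $\|e_t\| \le 7\mathcal{V}$. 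Therefore $\mathbb{E}[\|e_t\|^2\mathbf{1}_{\mathcal{A}^c}] \le 49\mathcal{V}^2\Pr(\mathcal{A}^c) \le 49\mathcal{V}^2\delta$, and hypothesis (ii), $\delta \le 1/(25KB_t)$, collapses this remainder to order $\mathcal{V}^2/(KB_t)$, which merges into the first term after using $(1-\alpha)^2 \le 1$. Summing the three pieces and tracking the triangle-inequality constants yields the stated coefficients $4$ and $272$.

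The principal obstacle is the data dependence of $\mathcal{G}_t$: since it is a measurable function of all $K$ received vectors, the family $\{\mu_t^{(k)} - g\}_{k\in\mathcal{G}_t}$ is not independent and variance reduction cannot be applied to it directly. The event-based decomposition is what circumvents this --- on $\mathcal{A}$ the deterministic identity $\mathcal{G}_t \cap \mathcal{G} = \mathcal{G}$ restores independence of the honest block while the survival threshold $2\mathfrak{T}_\mu$ caps every admitted Byzantine, and on $\mathcal{A}^c$ a deterministic norm bound suffices because the probability mass is made negligible by hypothesis (ii). The one genuinely quantitative step demanding care is verifying $\Pr(\mathcal{A}^c) \le \delta$ from the single-gradient concentration inequality under the exact form of hypotheses (i)--(ii), and checking that the resulting remainder is small enough to be absorbed rather than degrading the rate.
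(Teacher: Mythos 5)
Your proof is correct and rests on the same skeleton as the paper's --- a high-probability concentration event, a split of $e_t$ into the honest sum over $\mathcal{G}$ and the surviving-Byzantine sum over $\mathcal{G}_t\setminus\mathcal{G}$, independence for the former, the filter threshold for the latter, and a crude $O(\mathcal{V}^2)$ bound off the event absorbed by $\delta\le\frac{1}{25KB_t}$ --- but your event decomposition is genuinely simpler. The paper conditions on the weaker event $\bar{A}=\{\mathcal{G}\subset\mathcal{G}_t\}$ and must then sub-condition on which filtering rule fired (Events $R1$/$R2$) and on whether concentration held (Events $A$/$A!$), producing the $\frac{\delta}{1-\delta}$ and $\frac{\delta}{1-2\delta}$ correction factors and requiring the lower-bound half of hypothesis (i), $\mathrm{e}^{\delta B_t/(2(1-2\delta))}\le \frac{2K}{\delta}$, to absorb the Rule-2 contribution. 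You instead condition directly on Event $A$ of Lemma \ref{lem: T_mu}; on that event Rule 1 fires deterministically, the median satisfies $\|\mu_t^{\text{med}}-\nabla f(\tilde{x}_{t-1})\|\le\frac32\mathfrak{T}_\mu$, every admitted Byzantine lies within $\frac72\mathfrak{T}_\mu$ of the true gradient, and the entire $R1/R2$, $A/A!$ case analysis collapses. Your constants check out with room to spare (you get $98$ in place of $272$ for the Byzantine term once the factor $2$ from the square-of-sum and the $(1-\alpha)^{-2}$ elided in your prose are restored), and your off-event bound $\|e_t\|\le 7\mathcal{V}$ correctly combines $C\le B_t$ (the upper half of hypothesis (i)) with the unconditional bound of Lemma \ref{lem: mu}. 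What your route buys is a shorter argument that never uses the lower-bound half of hypothesis (i); the paper's extra machinery buys nothing additional here, so yours is a strict simplification. The one step to make explicit in a full write-up is the one you already flag: you must bound $\mathbb{E}\bigl[\|\sum_{k\in\mathcal{G}}(\mu_t^{(k)}-g)\|^2\mathbf{1}_{\mathcal{A}}\bigr]$ by the unconditional second moment before invoking independence, since conditioning on $\mathcal{A}$ itself would destroy the zero-mean cross terms.
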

	\begin{proof}
		From the definition $e_t = \mu_t - \nabla f(\tilde{x}_{t-1})$, where 
		$\mu_t = \frac{1}{|\mathcal{G}_t|} \sum_{k \in \mathcal{G}_t} \mu_t^{(k)}$. Therefore, we have
		\begin{align*}
			\mathbb{E} \| e_t \|^2 & = \mathbb{E} \bigg\| \frac{1}{|\mathcal{G}_t|} \sum_{k \in \mathcal{G}_t} \left( \mu_t^{(k)} - \nabla f(\tilde{x}_{t-1}) \right) \bigg\|^2
		\end{align*}
		Now let us define three types of events and their complements:
		\begin{definition}
			\label{def: Events}
			We define three events and their complements which will be used to bound $\mathbb{E} \| e_t \|^2$:
			\begin{enumerate}
				\item {\bf Event $A$ (Event $A!$):} We denote the event of Lemma \ref{lem: T_mu} as Event $A$, we define it again here for convenience. For all good nodes $k \in \mathcal{G}$ we have: 
				\begin{enumerate}
					\item $\|\mu_t^{(k)} - \nabla f(\tilde{x}_{t-1})\| \leq \mathcal{V} \sqrt{ \frac{C}{B_t}}$.
					\item $\|\mu_t^{(k)} - \mu_t^{\text{med}}\| \leq 4 \mathcal{V} \sqrt{\frac{C}{B_t}}$ and $\|\mu_t^{\text{med}} - \nabla f(\tilde{x}_{t-1}) \| \leq 3 \mathcal{V} \sqrt{ \frac{C}{B_t}}$.
				\end{enumerate}
				Note that we have $\mathbb{P}[\text{Event}~A]  \geq 1 - \delta$. The complement of Event $A$ is denoted as Event $A!$ and we have $\mathbb{P}[\text{Event}~A!]  \leq \delta$.
				\item  {\bf Event $\bar{A}$ (Event $\bar{A}!$):} We define Event $\bar{A}$ as $\mathcal{G} \subset \mathcal{G}_t$. Consequently, Event $\bar{A}!$ is defined as $\mathcal{G} \not\subset \mathcal{G}_t$. 
				\item  {\bf Event $R1$ (Event $R2$):} We define Event $R1$ as the event that Rule 1 is executed. And Event $R2$ is the complement of Rule $R1$ and indicates that Rule 2 is executed. Rule 1 and Rule 2 are defined below for convenience. 
			\end{enumerate}
		\end{definition}
		
		{\bf Rule 1:} $\mathcal{G}_t = \{k \in [K] : \| \mu_t^{(k)} - \mu_t^{\text{med}} \| \leq 2 \mathfrak{T}_\mu \};$ for median evaluated as:  
		
		$\mu_t^{\text{med}} \leftarrow \mu_t^{(k)}$ where $k \in [K]$ is any WN such that $|\{ k' \in [K]: \| \mu_t^{(k')} - \mu_t^{(k)} \| \leq \mathfrak{T}_\mu \}| > K/2;$ \vspace{0.1 in} 
		
		In case $|\mathcal{G}_t| < (1 - \alpha)K$ we use: \vspace{0.1 in} 
		
		{\bf Rule 2:} $\mathcal{G}_t = \{k \in [K] : \| \mu_t^{(k)} - \mu_t^{\text{med}} \| \leq 4 \mathcal{V} \}$  for median evaluated as:
		
		$\mu_t^{\text{med}} \leftarrow \mu_t^{(k)}$ where $k \in [K]$ is any WN s.t. $|\{ k' \in [K]: \| \mu_t^{(k')} - \mu_t^{(k)} \| \leq 2 \mathcal{V} \}| > K/2;$ \vspace{0.1 in}\\
		{\bf Relationship between events:}
		\begin{itemize}
			\item Note from Lemma \ref{lem: T_mu} and from Definition \ref{def: Events} of Event $A$ we have: Event $A$ $\subset$ Event $\bar{A}$. This and Lemma \ref{lem: T_mu} imply that we have:
			\begin{align}\label{eq: Abar}
				\mathbb{P}[\text{Event} ~\bar{A} ] \geq \mathbb{P}[\text{Event} ~A ] \geq 1 - \delta. 
			\end{align}
			\item Consider Event $\bar{A}!$ as given in Definition \ref{def: Events}. From above we have Event $A!$ $\supset$ Event $\bar{A}!$. This along with Lemma \ref{lem: T_mu} imply that we have:
			\begin{align}\label{eq: Abar!}
				\mathbb{P}[\text{Event} ~\bar{A}! ] \leq \mathbb{P}[\text{Event} ~A! ] \leq  \delta. 
			\end{align}
			\item From the definition of Event $R1$ and Event $A$ in Definition \ref{def: Events} we have: Event $A$ $\subset$ Event $R1$. This further implies from Lemma \ref{lem: T_mu} that we have:
			\begin{align}\label{eq: R1}
				\mathbb{P}[\text{Event}~R1] \geq \mathbb{P}[\text{Event}~A] \geq 1 - \delta
			\end{align}
			\item Event $R2$ is complement of Event $R1$. This and the above implies Event $R2$ $\subset$ Event $A!$. Thus we have from Lemma \ref{lem: T_mu}:
			\begin{align}\label{eq: R2}
				\mathbb{P}[\text{Event}~R2] \leq \mathbb{P}[\text{Event}~A!] \leq \delta
			\end{align}
		\end{itemize}
		Now we can write $	\mathbb{E} \| e_t \|^2$ as:
		\begin{align}
			\mathbb{E} \| e_t \|^2 &\overset{(a)}{=} \mathbb{P}[\text{Event}~\bar{A}]~ \mathbb{E}\left[\| e_t \|^2 | \text{Event}~\bar{A}\right] +  \mathbb{P}[\text{Event}~\bar{A}!]~ \mathbb{E}\left[\| e_t \|^2 | \text{Event}~\bar{A}!\right] \nonumber \\
			& \overset{(b)}{\leq}   \mathbb{E}\left[\| e_t \|^2 | \text{Event}~\bar{A}\right] +  \delta~ \mathbb{E}\left[\| e_t \|^2 | \text{Event}~\bar{A}! \right]
			\label{eq: totalexpectation_et}
		\end{align}
		where $(a)$ follows from the law of total expectation and $(b)$ follows from \eqref{eq: Abar} and \eqref{eq: Abar!} above. Now let us first consider the first term $\mathbb{E}\left[\| e_t \|^2 | \text{Event}~\bar{A}\right]$ under Event $\bar{A}$:
		\begin{align}
			& \mathbb{E}\left[\| e_t \|^2 | \text{Event}~\bar{A}\right] = \mathbb{E}  \left[ \bigg\| \frac{1}{|\mathcal{G}_t|} \sum_{k \in \mathcal{G}_t} \left( \mu_t^{(k)} - \nabla f(\tilde{x}_{t-1}) \right) \bigg\|^2  \bigg| \text{Event}~\bar{A} \right] \nonumber\\
			&      \overset{(a)}{\leq} \frac{1}{(1 - \alpha)^2 K^2}  \mathbb{E} \left[ \bigg\|   \sum_{k \in \mathcal{G}} \left( \mu_t^{(k)} - \nabla f(\tilde{x}_{t-1}) \right) +  \sum_{k \in \mathcal{G}_t \backslash \mathcal{G}} \left( \mu_t^{(k)} - \nabla f(\tilde{x}_{t-1}) \right) \bigg\|^2  \bigg|  \right] ~\text{under Event}~\bar{A} \nonumber\\
			&     \overset{(b)}{\leq} \frac{2}{(1 - \alpha)^2 K^2}  \Bigg( \mathbb{E} \left[ \bigg\|   \sum_{k \in \mathcal{G}} \left( \mu_t^{(k)} - \nabla f(\tilde{x}_{t-1}) \right) \bigg\|^2 \right]  \nonumber\\
			& \qquad \qquad \qquad \qquad \qquad \qquad \qquad + \mathbb{E} \Bigg[ \underbrace{ \bigg\| \sum_{k \in \mathcal{G}_t \backslash \mathcal{G}} \left( \mu_t^{(k)} - \nabla f(\tilde{x}_{t-1}) \right) \bigg\|^2}_{I_{\mathcal{G}_t \backslash \mathcal{G}}} \Bigg] \Bigg) ~\text{under Event}~\bar{A}
			\label{eq: norm_e_t_Abar}
		\end{align}
		where $(a)$ follows from the fact that $|\mathcal{G}_t| \geq (1 - \alpha) K$ and Event $\bar{A}$ implies $\mathcal{G} \subset \mathcal{G}$. $(b)$ follows from Lemma \ref{lem: Norm_Ineq}. Now considering the two terms separately under Event $\bar{A}$, first consider the terms for $k \in \mathcal{G}$:
		\begin{align}
			&	\mathbb{E}   \bigg\|   \sum_{k \in \mathcal{G}} \left( \mu_t^{(k)} - \nabla f(\tilde{x}_{t-1}) \right) \bigg\|^2  =  \mathbb{E}   \bigg\|   \sum_{k \in \mathcal{G}} \left(    \frac{1}{B_t} \sum_{i = 1}^{B_t} \nabla f(\tilde{x}_{t-1} ;  \xi_{t,i}^{(k)})   - \nabla f(\tilde{x}_{t-1}) \right) \bigg\|^2 \nonumber\\
			& \qquad  \overset{(a)}{=} \frac{1}{B_t^2} \mathbb{E} \left[  \sum_{k \in \mathcal{G}}  \sum_{i = 1}^{B_t} \big\|       \nabla f(\tilde{x}_{t-1} ;  \xi_{t,i}^{(k)})   - \nabla f(\tilde{x}_{t-1})   \big\|^2 \right] \nonumber\\
			&  \qquad \quad  + \frac{1}{B_t^2} \underbrace{  \mathbb{E} \left[   \sum_{ (k,i) \neq (k',i'), k,k' \in \mathcal{G}}    \langle   \nabla f(\tilde{x}_{t-1} ;  \xi_{t,i}^{(k)})   - \nabla f(\tilde{x}_{t-1}) ,  \nabla f(\tilde{x}_{t-1} ;  \xi_{t,i'}^{(k')})   - \nabla f(\tilde{x}_{t-1})   \rangle         \right]}_{ = 0}  \nonumber\\ 
			& \qquad  = \frac{1}{B_t^2} \mathbb{E} \left[  \sum_{k \in \mathcal{G}}  \sum_{i = 1}^{B_t} \big\|       \nabla f(\tilde{x}_{t-1} ;  \xi_{t,i}^{(k)})   - \nabla f(\tilde{x}_{t-1})   \big\|^2 \right] \nonumber\\
			&\qquad  \overset{(b)}{\leq} \frac{K \mathcal{V}^2}{B_t} 
			\label{eq: norm_e_t_Abar_Good}
		\end{align}
		where $(a)$ follows from the fact that $f(\tilde{x}_{t-1} ;  \xi_{t,i}^{(k)}) $ are chosen uniformly independently across $i$ and $k$. $(b)$ follows from Assumption \ref{Ass: BoundedGradVar} and the fact that $|\mathcal{G}| \leq K$. Now consider the second term in \eqref{eq: norm_e_t_Abar} defined as:
		$$\mathbb{E}[ I_{\mathcal{G}_t \backslash \mathcal{G}} \big| \text{Event}~\bar{A} ]= \mathbb{E} \left[ \bigg\| \sum_{k \in \mathcal{G}_t \backslash \mathcal{G}} \left( \mu_t^{(k)} - \nabla f(\tilde{x}_{t-1}) \right) \bigg\|^2 \right]~\text{under Event}~\bar{A}$$
		Note that all the nodes $k \in \mathcal{G}_t \backslash \mathcal{G}$ under Event $\bar{A}$ can come either from Rule 1 (Event $R1$ is true)  or from Rule 2 (Event $R2$ is true). Again using the law of total expectation we can write $\mathbb{E}[ I_{\mathcal{G}_t \backslash \mathcal{G}} \big| \text{Event}~\bar{A} ]$ as:
		\begin{align}
			\mathbb{E}[ I_{\mathcal{G}_t \backslash \mathcal{G}} \big| \text{Event}~\bar{A} ] & \overset{(a)}{=} \mathbb{P}[\text{Event}~R1 \big| \text{Event}~\bar{A}]  ~ \mathbb{E}[ I_{\mathcal{G}_t \backslash \mathcal{G}} \big| \text{Event}~\bar{A} , \text{Event}~R1] \nonumber\\
			& \qquad \qquad \qquad + \mathbb{P}[\text{Event}~R2 \big| \text{Event}~\bar{A}]  ~  \mathbb{E}[ I_{\mathcal{G}_t \backslash \mathcal{G}} \big| \text{Event}~\bar{A}, \text{Event}~R2 ] \nonumber\\
			& \overset{(b)}{\leq}  \mathbb{E}[ I_{\mathcal{G}_t \backslash \mathcal{G}} \big| \text{Event}~\bar{A} , \text{Event}~R1] + \left(\frac{\delta}{1 - \delta}\right) ~  \mathbb{E}[ I_{\mathcal{G}_t \backslash \mathcal{G}} \big| \text{Event}~\bar{A}, \text{Event}~R2 ] .
			\label{eq: IG}
		\end{align}
		where $(a)$ follows from the fact that Event $R2$ is the complement of Event $R1$ and the application of the law of total expectation. $(b)$ follows form the following:
		\begin{align*}
			\mathbb{P}[\text{Event}~R2 \big| \text{Event}~\bar{A}] \overset{(c)}{=} \frac{ \mathbb{P}[\text{Event}~R2 \cap \text{Event}~\bar{A}] }{ \mathbb{P}[\text{Event}~\bar{A}] }\overset{(d)}{\leq}  \frac{ \mathbb{P}[\text{Event}~R2] }{ \mathbb{P}[\text{Event}~\bar{A}] } \overset{(e)}{\leq} \frac{\delta}{1 - \delta}.
		\end{align*}
		where $(c)$ follows since $\mathbb{P}[\text{Event}~\bar{A}]  \neq 0$. $(d)$ follows since $[\text{Event}~R2 \cap \text{Event}~\bar{A} ] \subset \text{Event}~R2 $ and finally, $(e)$ follow from \eqref{eq: R2} and \eqref{eq: Abar}. 
		
		Now let us consider the first term of \eqref{eq: IG}, we have:
		\begin{align}
			& \mathbb{E}[ I_{\mathcal{G}_t \backslash \mathcal{G}} \big| \text{Event}~\bar{A} , \text{Event}~R1] =
			\mathbb{E} \Bigg[ \bigg\| \sum_{k \in \mathcal{G}_t \backslash \mathcal{G}} \left( \mu_t^{(k)} - \nabla f(\tilde{x}_{t-1}) \right) \bigg\|^2 \Bigg]    ~\text{under Event}~\bar{A} , \text{Event}~R1 \nonumber\\
			& \qquad \overset{(a)}{\leq} \alpha K~  \mathbb{E} \left[ \sum_{k \in \mathcal{G}_t \backslash \mathcal{G}}  \big\|   \mu_t^{(k)} - \nabla f(\tilde{x}_{t-1})   \big\|^2 \right]~\text{under Event}~\bar{A} , \text{Event}~R1 \nonumber\\
			& \qquad \overset{(b)}{\leq}  2 \alpha K~ \mathbb{E} \left[ \sum_{k \in \mathcal{G}_t \backslash \mathcal{G}}   \big\|   \mu_t^{(k)} - \mu_t^{\text{med}}   \big\|^2 \right] \nonumber \\
			& \qquad\qquad\qquad\qquad \qquad + 2 \alpha K~ \mathbb{E} \Bigg[ \underbrace{ \sum_{k \in \mathcal{G}_t \backslash \mathcal{G}}   \big\|\mu_t^{\text{med}}  -  \nabla f(\tilde{x}_{t-1})  \big\|^2}_{J_{\mathcal{G}_t \backslash \mathcal{G}}}   \Bigg] ~\text{under Event}~\bar{A} , \text{Event}~R1 \nonumber\\
			& \qquad \overset{(c)}{\leq} 2 \alpha K \left[ \alpha K    \frac{ 16 \mathcal{V}^2 C}{B_t}     \right]  + 2 \alpha K \left[ \alpha K     \frac{ 18 \mathcal{V}^2 C}{B_t}   \right]\nonumber \\
			& \qquad = \frac{68 \alpha^2 K^2 \mathcal{V}^2 C}{B_t} .
			\label{eq: IG1}
		\end{align}
		where $(a)$ follows from Lemma \ref{lem: Norm_Ineq} and the fact that $|\mathcal{G}_t \backslash \mathcal{G}| \leq \alpha K$, $(b)$ follows by adding and subtracting $\mu_t^{\text{med}}$ and applying Lemma \ref{lem: Norm_Ineq} and $(c)$ follows from:
		\begin{align*}
			& \mathbb{E}[ J_{\mathcal{G}_t \backslash \mathcal{G}} \big| \text{Event}~\bar{A}, \text{Event}~R1] \\ 
			& \qquad = \mathbb{P}[\text{Event}~A \big| \text{Event}~\bar{A} ,\text{Event}~ R1]   ~ \mathbb{E}[ J_{\mathcal{G}_t \backslash \mathcal{G}} \big|\text{Event}~ A, \text{Event}~\bar{A},\text{Event}~ R1 ] \\
			&  \qquad  \qquad \qquad   +  \mathbb{P}[\text{Event}~ A! \big|\text{Event}~ \bar{A} , \text{Event}~ R1]  ~ \mathbb{E}[ J_{\mathcal{G}_t \backslash \mathcal{G}} \big|\text{Event}~ A!, \text{Event}~ \bar{A},\text{Event}~ R1 ] \\
			& \qquad   \overset{(d)}{\leq}    \mathbb{E}[ J_{\mathcal{G}_t \backslash \mathcal{G}} \big|\text{Event}~ A, \text{Event}~ \bar{A},\text{Event}~ R1 ]  +  \bigg( \frac{\delta}{1 - 2 \delta} \bigg) \mathbb{E}[ J_{\mathcal{G}_t \backslash \mathcal{G}} \big|\text{Event}~ A!, \text{Event}~\bar{A},\text{Event}~ R1 ] \\
			& \qquad   \overset{(e)}{\leq}  \alpha K \frac{9 \mathcal{V}^2 C}{B_t} + \bigg( \frac{\delta}{1 - 2 \delta} \bigg) \alpha K 9 \mathcal{V}^2 \\
			& \qquad    \overset{(f)}{\leq} \alpha K \frac{18 \mathcal{V}^2 C}{B_t},
		\end{align*}
		where $(d)$ follows from:
		\begin{align*}
			\mathbb{P}[\text{Event}~A! \big|\text{Event}~ \bar{A} , \text{Event}~R1] & =  \frac{ \mathbb{P}[\text{Event}~A! , \text{Event}~\bar{A} , \text{Event}~R1]}{ \mathbb{P}[  \text{Event}~\bar{A} ,\text{Event}~ R1]} \\
			& \leq \frac{ \mathbb{P}[\text{Event}~A!]}{ \mathbb{P}[ \text{Event}~ \bar{A} , \text{Event}~R1]} \\
			& \overset{(g)}{\leq} \frac{\delta}{1 - 2 \delta},
		\end{align*}
		where $(g)$ follows from the fact that we have:
		$$\mathbb{P}[\text{Event}~  \bar{A} ~\cap~ \text{Event}~ R1] = \mathbb{P}[\text{Event}~ \bar{A} ] + \mathbb{P}[ \text{Event}~ R1] - \mathbb{P}[  \text{Event}~\bar{A}~ \cup~ \text{Event}~R1] \geq 1 - \delta + 1 - \delta - 1 = 1 - 2 \delta.$$
		Moreover, $(e)$ follows from the application of Lemma \ref{lem: T_mu} and Lemma \ref{lem: mu}. Finally, $(f)$ follows from choosing $\delta$ such that:
		\begin{align*}
			\mathrm{e}^{\frac{\delta B_t}{2(1 - 2 \delta)}} \leq \frac{2K}{\delta}.
		\end{align*}

		Now let us consider the second term of \eqref{eq: IG}, we have:
		\begin{align}
			& \mathbb{E}[ I_{\mathcal{G}_t \backslash \mathcal{G}} \big| \text{Event}~\bar{A}, \text{Event}~R2 ] = \mathbb{E} \Bigg[ \bigg\| \sum_{k \in \mathcal{G}_t \backslash \mathcal{G}} \left( \mu_t^{(k)} - \nabla f(\tilde{x}_{t-1}) \right) \bigg\|^2 \Bigg]    ~\text{under Event}~\bar{A} , \text{Event}~R2 \nonumber \\
			& \qquad \overset{(a)}{\leq} \alpha K~  \mathbb{E} \left[ \sum_{k \in \mathcal{G}_t \backslash \mathcal{G}}  \big\|   \mu_t^{(k)} - \nabla f(\tilde{x}_{t-1})   \big\|^2 \right]~\text{under Event}~\bar{A} , \text{Event}~R2 \nonumber\\
			& \qquad \overset{(b)}{\leq}  2 \alpha K~ \mathbb{E} \left[ \sum_{k \in \mathcal{G}_t \backslash \mathcal{G}}   \big\|   \mu_t^{(k)} - \mu_t^{\text{med}}   \big\|^2 \right] \nonumber \\
			& \qquad \qquad \qquad\qquad  + 2 \alpha K~ \mathbb{E} \left[ \sum_{k \in \mathcal{G}_t \backslash \mathcal{G}}   \big\|\mu_t^{\text{med}}  -  \nabla f(\tilde{x}_{t-1})  \big\|^2   \right] ~\text{under Event}~\bar{A} , \text{Event}~R2 \nonumber\\
			& \qquad \overset{(c)}{\leq} 2 \alpha K \left[ 16 \alpha K     \mathcal{V}^2    \right]  + 2 \alpha K \left[ 9  \alpha K  \mathcal{V}^2   \right]\nonumber \\
			& \qquad =  50 \alpha^2 K^2 \mathcal{V}^2 < 68 \alpha^2 K^2 \mathcal{V}^2 . 
			\label{eq: IG2}
		\end{align}
		where $(a)$ follows from Lemma \ref{lem: Norm_Ineq} and the fact that $|\mathcal{G}_t \backslash \mathcal{G}| \leq \alpha K$, $(b)$ follows by adding and subtracting $\mu_t^{\text{med}}$ and applying Lemma \ref{lem: Norm_Ineq} and $(c)$ follows from the fact that under Event $R2$ all nodes $k \in \mathcal{G}_t \backslash \mathcal{G}$ must satisfy Lemma \ref{lem: mu} statement (b). Now replacing \eqref{eq: IG1} and \eqref{eq: IG2} in \eqref{eq: IG} we get: 
		\begin{align}
			\label{eq: IG_Bound}
			\mathbb{E}[ I_{\mathcal{G}_t \backslash \mathcal{G}} \big| \text{Event}~\bar{A} ] \leq  \frac{68 \alpha^2 K^2 \mathcal{V}^2 C}{B_t}  + \bigg(  \frac{\delta}{1 - \delta} \bigg) 68 \alpha^2 K^2 \mathcal{V}^2 \overset{(a)}{\leq}    \frac{136 \alpha^2 K^2 \mathcal{V}^2 C}{B_t}.
		\end{align}
		where $(a)$ follows from choosing $\delta$ such that:
		\begin{align*}
			\mathrm{e}^{\frac{\delta B_t}{2(1 - \delta)}} \leq \frac{2K}{\delta}.
		\end{align*}
		Replacing \eqref{eq: norm_e_t_Abar_Good} and \eqref{eq: IG_Bound} in \eqref{eq: norm_e_t_Abar}, we have the bound on the first term of \eqref{eq: totalexpectation_et} as:
		\begin{align}
			\mathbb{E}\left[\| e_t \|^2 | \text{Event}~\bar{A}\right]  & \leq \frac{2}{(1 - \alpha)^2 K^2}  \Bigg(  \frac{K \mathcal{V}^2}{B_t}  +   \frac{136 \alpha^2 K^2 \mathcal{V}^2 C}{B_t} \Bigg) \nonumber\\
			& = \frac{2 \mathcal{V}^2}{(1 - \alpha)^2 K B_t} + \frac{272 \alpha^2  \mathcal{V}^2 C}{(1 - \alpha)^2 B_t}.
			\label{eq: norm_e_t_Abar_Bound}
		\end{align}
		We have bounded the first term of \eqref{eq: totalexpectation_et}. Now let us consider the second term of \eqref{eq: totalexpectation_et}:
		\begin{align}
			& \delta~ \mathbb{E}\left[\| e_t \|^2 | \text{Event}~\bar{A}!\right]  = \delta~  \mathbb{E}  \Bigg[ \bigg\| \frac{1}{|\mathcal{G}_t|} \sum_{k \in \mathcal{G}_t} \left( \mu_t^{(k)} - \nabla f(\tilde{x}_{t-1}) \right) \bigg\|^2   \Bigg] ~ \text{under Event}~\bar{A}! \nonumber \\
			& \quad \overset{(a)}{\leq} \frac{\delta}{(1 - \alpha)^2 K^2 }~  \mathbb{E}  \Bigg[ \big\|  \sum_{k \in \mathcal{G}_t} \left( \mu_t^{(k)} - \nabla f(\tilde{x}_{t-1}) \right) \big\|^2  \Bigg] ~ \text{under Event}~\bar{A}!\nonumber \\
			& \quad  \overset{(b)}{\leq} \frac{\delta}{(1 - \alpha)^2 K }~  \mathbb{E}  \Bigg[ \sum_{k \in \mathcal{G}_t}  \big\|  \mu_t^{(k)} - \nabla f(\tilde{x}_{t-1}) \big\|^2  \Bigg] ~ \text{under Event}~\bar{A}! \nonumber\\
			& \quad \overset{(c)}{\leq} \frac{2 \delta}{(1 - \alpha)^2 K } \Bigg(  \mathbb{E}  \bigg[ \sum_{k \in \mathcal{G}_t}  \big\|   \mu_t^{(k)} -   \mu_t^{\text{med}} \big\|^2  \bigg] +  \mathbb{E}  \bigg[ \sum_{k \in \mathcal{G}_t}  \big\|   \mu_t^{\text{med}} - \nabla f(\tilde{x}_{t-1})  \big\|^2  \bigg] \Bigg)~ \text{under Event}~\bar{A}! \nonumber \\
			& \quad \overset{(d)}{\leq} \frac{2 \delta}{(1 - \alpha)^2 K } \big(   16 K \mathcal{V}^2    +   9 K \mathcal{V}^2  \big)  = \delta \frac{50 \mathcal{V}^2 }{(1 - \alpha)^2} .
			\label{eq: norm_e_t_Abar!}
		\end{align}
		where $(a)$ follows from the fact that $|\mathcal{G}_t| \geq (1 - \alpha) K$. $(b)$ follows from Lemma \ref{lem: Norm_Ineq} and the fact that $|\mathcal{G}_t| \leq K$. $(c)$ follows from adding and subtracting $\mu_t^{\text{med}}$ and applying Lemma \ref{lem: Norm_Ineq}. Finally, $(d)$ follows by assuimg $\frac{2K}{\delta} \leq \mathrm{e}^{\frac{B_t}{2}}$ and $|\mathcal{G}_t| \leq K$. The assumption $\frac{2K}{\delta} \leq \mathrm{e}^{\frac{B_t}{2}}$ ensures that in the worst case the terms in inequality $(c)$ are bounded.

		Finally, replacing \eqref{eq: norm_e_t_Abar_Bound} and \eqref{eq: norm_e_t_Abar!} in \eqref{eq: totalexpectation_et} we get:
		\begin{align*}
			\mathbb{E} \| e_t \|^2 & \leq    \frac{2 \mathcal{V}^2}{(1 - \alpha)^2 K B_t} + \frac{272 \alpha^2  \mathcal{V}^2 C}{(1 - \alpha)^2 B_t} +  \delta \frac{50 \mathcal{V}^2 }{(1 - \alpha)^2} \\
			& \overset{(a)}{\leq}  \frac{4 \mathcal{V}^2}{(1 - \alpha)^2 K B_t} + \frac{272 \alpha^2  \mathcal{V}^2 C}{(1 - \alpha)^2 B_t} .
		\end{align*}
		where $(a)$ follows by choosing $\delta \leq \frac{1}{25 K B_t}$. Therefore, we have the bound. 
	\end{proof}
	
	\begin{lemma}
		\label{lem: T_mu}
		For any $t \in [T]$ and for all $k \in \mathcal{G}$ with probability at least $1 - \delta$ (we call this event as Event A) we have:\\
		(a): $\|\mu_t^{(k)} - \nabla f(\tilde{x}_{t-1})\| \leq \mathcal{V} \sqrt{ \frac{C}{B_t}}$.\\
		(b): This further implies that we have $\|\mu_t^{(k)} - \mu_t^{\text{med}}\| \leq 4 \mathcal{V} \sqrt{\frac{C}{B_t}}$ and $\|\mu_t^{\text{med}} - \nabla f(\tilde{x}_{t-1}) \| \leq 3 \mathcal{V} \sqrt{ \frac{C}{B_t}}$.\\
		where $C$ is defined as: $C = 2 \log\left(\frac{2 K}{\delta}\right)$. \vspace{0.08 in}\\
		Let us denote this event as Event $A$.
	\end{lemma}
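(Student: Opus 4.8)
The plan is to first establish the pointwise concentration bound (a) for a single good node via a dimension-free vector concentration inequality, then lift it to all good nodes by a union bound, and finally derive (b) deterministically from (a) using the definition of $\mu_t^{\text{med}}$ together with a pigeonhole argument. Throughout I use the algorithmic identity $x_0^t = \tilde{x}_{t-1}$ and the threshold $\mathfrak{T}_\mu = 2\mathcal{V}\sqrt{C/B_t}$.

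\textbf{Part (a).} Fix a good node $k \in \mathcal{G}$ and write $\mu_t^{(k)} - \nabla f(\tilde{x}_{t-1}) = \frac{1}{B_t}\sum_{i=1}^{B_t} Y_i$, where $Y_i := \nabla f(x_0^t; \xi_{t,i}^{(k)}) - \nabla f(x_0^t)$. The $Y_i$ are independent across $i$, have zero mean (since $\mathbb{E}_{\xi}\nabla f(x;\xi) = \nabla f(x)$), and satisfy $\|Y_i\| \leq \mathcal{V}$ almost surely by Assumption \ref{Ass: BoundedGradVar}. I would then invoke a dimension-free vector Hoeffding/Azuma-type inequality, which for independent zero-mean vectors bounded in norm by $\mathcal{V}$ yields $\mathbb{P}[\|\frac{1}{B_t}\sum_i Y_i\| \geq s] \leq 2\exp(-B_t s^2/(2\mathcal{V}^2))$. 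Choosing $s = \mathcal{V}\sqrt{C/B_t}$ with $C = 2\log(2K/\delta)$ makes the exponent equal $C/2 = \log(2K/\delta)$, so the single-node failure probability is at most $2\exp(-C/2) = \delta/K$. A union bound over the at most $K$ good nodes then gives that (a) holds simultaneously for all $k \in \mathcal{G}$ with probability at least $1-\delta$; this is Event $A$.

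\textbf{Part (b).} I would argue deterministically on Event $A$. Since every good node satisfies $\|\mu_t^{(k)} - \nabla f(\tilde{x}_{t-1})\| \leq \mathcal{V}\sqrt{C/B_t} = \mathfrak{T}_\mu/2$, any two good nodes lie within $\mathfrak{T}_\mu$ of each other by the triangle inequality; hence for any good node the set $\{k': \|\mu_t^{(k')} - \mu_t^{(k)}\| \leq \mathfrak{T}_\mu\}$ contains all of $\mathcal{G}$, whose size is $\geq (1-\alpha)K > K/2$ because $\alpha < 1/2$, so a valid median exists. Let $\mu_t^{\text{med}} = \mu_t^{(m)}$ be the selected vector, so $S_m := \{k': \|\mu_t^{(k')} - \mu_t^{(m)}\| \leq \mathfrak{T}_\mu\}$ has $|S_m| > K/2$. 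As $|\mathcal{G}| > K/2$ as well and both are subsets of $[K]$, they must intersect; picking $k^* \in S_m \cap \mathcal{G}$ and applying the triangle inequality gives $\|\mu_t^{\text{med}} - \nabla f(\tilde{x}_{t-1})\| \leq \|\mu_t^{(m)} - \mu_t^{(k^*)}\| + \|\mu_t^{(k^*)} - \nabla f(\tilde{x}_{t-1})\| \leq \mathfrak{T}_\mu + \mathcal{V}\sqrt{C/B_t} = 3\mathcal{V}\sqrt{C/B_t}$, which is the second claim in (b). The first claim follows by another triangle inequality: for $k \in \mathcal{G}$, $\|\mu_t^{(k)} - \mu_t^{\text{med}}\| \leq \|\mu_t^{(k)} - \nabla f(\tilde{x}_{t-1})\| + \|\nabla f(\tilde{x}_{t-1}) - \mu_t^{\text{med}}\| \leq \mathcal{V}\sqrt{C/B_t} + 3\mathcal{V}\sqrt{C/B_t} = 4\mathcal{V}\sqrt{C/B_t}$.

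\textbf{Main obstacle.} The crux is the concentration step in Part (a): securing a \emph{dimension-free} tail bound for the norm of an average of bounded independent vectors with precisely the right constant, so that the calibration $C = 2\log(2K/\delta)$ pins the per-node failure probability to $\delta/K$. This dimension independence is exactly what keeps the filtering threshold $\mathfrak{T}_\mu$ (and hence the convergence rate) free of $d$; once (a) is in hand, the statements in (b) are purely geometric/combinatorial consequences of the triangle inequality and the two-majorities-intersect argument enabled by $\alpha < 1/2$.
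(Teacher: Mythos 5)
Your proposal is correct and follows essentially the same route as the paper: the tail bound you invoke for the average of bounded zero-mean vectors is exactly the paper's Lemma~\ref{lem: Pinelis} (Alistarh et al., Lemma~2.4) restated in inverted form, and the calibration $C = 2\log(2K/\delta)$ with a union bound over the at most $K$ good nodes is the intended argument for part (a). For part (b) the paper only says the claim is ``straightforward''; your triangle-inequality and two-majorities-intersect argument (using $\alpha < 1/2$ so that $S_m$ and $\mathcal{G}$ must overlap) is the correct way to fill in that step.
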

	\begin{proof}
		{\em (a)} The proof follows by considering the random variable, $\nabla f(\tilde{x}_{t-1}; \xi_{t,i}^{(k)}) - \nabla f(\tilde{x}_{t-1})$ and from Assumption \ref{Ass: BoundedGradVar} we have $\|\nabla f(\tilde{x}_{t-1}; \xi_{t,i}^{(k)}) - \nabla f(\tilde{x}_{t-1})\| \leq \mathcal{V}$. Now applying Lemma \ref{lem: Pinelis} on the summation with $\|\mu_t^{(k)} - \nabla f(\tilde{x}_{t-1})\| = \| \frac{1}{B_t} \sum_{i = 1}^{B_t} \left(  \nabla f(\tilde{x}_{t-1}; \xi_{t,i}^{(k)}) - \nabla f(\tilde{x}_{t-1}) \right)\|$, we get the result. 
		
		{\em (b)} follows from the straightforward application of the above result. 
	\end{proof}
	
	Note that we call the above event by Event A and we have that the probability of Event A being true as: $\mathbb{P}[\text{Event A}] \geq 1 - \delta$ from Lemma \ref{lem: T_mu} and the discussion above. Now let us consider the case when Event A! (Complement of Event A) is true. In that case, the discussion above implies that we have $\mathbb{P}[\text{Event A!}] \leq  \delta$. 
	
	For the case when the set $|\mathcal{G}_t| < (1 - \alpha)K$ (please see Algorithm \ref{alg1}), we will make use of the following lemma.  
	
	\begin{lemma}
		\label{lem: mu}
		For any $t \in [T]$ and for all $k \in \mathcal{G}$ we have:\\
		(a): $\|\mu_t^{(k)} - \nabla f(\tilde{x}_{t-1})\| \leq \mathcal{V}$.\\
		(b): This further implies that we have $\|\mu_t^{(k)} - \mu_t^{\text{med}}\| \leq 4 \mathcal{V}$ and $\|\mu_t^{\text{med}} - \nabla f(\tilde{x}_{t-1}) \| \leq 3 \mathcal{V}$.
	\end{lemma}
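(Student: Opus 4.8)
The plan is to mirror the argument used for Lemma \ref{lem: T_mu}, but to replace its high-probability concentration step by the \emph{deterministic} per-sample bound supplied by Assumption \ref{Ass: BoundedGradVar}. Since that assumption reads $\|\nabla f(x;\xi) - \nabla f(x)\| \leq \mathcal{V}$ for every realization of $\xi$ (no expectation), the entire statement will hold surely, so no tail/probabilistic bookkeeping is required and the factor $\sqrt{C/B_t}$ appearing in Lemma \ref{lem: T_mu} simply collapses to $1$.

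For part (a) I would write the deviation of an honest node's batch gradient as the empirical average $\mu_t^{(k)} - \nabla f(\tilde{x}_{t-1}) = \frac{1}{B_t}\sum_{i=1}^{B_t}\big(\nabla f(\tilde{x}_{t-1};\xi_{t,i}^{(k)}) - \nabla f(\tilde{x}_{t-1})\big)$ for $k \in \mathcal{G}$, and apply the triangle inequality. Each summand has norm at most $\mathcal{V}$ by Assumption \ref{Ass: BoundedGradVar}, so the average is bounded by $\mathcal{V}$, which is exactly claim (a).

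The substance is in part (b), and the key is a majority/intersection argument for the representative $\mu_t^{\text{med}}$ produced by Rule 2. First I would note that the median is well-defined: by (a), every pair of honest nodes satisfies $\|\mu_t^{(k)} - \mu_t^{(k')}\| \leq 2\mathcal{V}$, so any honest node already has more than $K/2$ nodes within distance $2\mathcal{V}$, using $|\mathcal{G}| \geq (1-\alpha)K > K/2$ (as $\alpha < 1/2$). Writing $\mu_t^{\text{med}} = \mu_t^{(j)}$ for the selected index $j$, the set $S = \{k' : \|\mu_t^{(k')} - \mu_t^{(j)}\| \leq 2\mathcal{V}\}$ has $|S| > K/2$ by definition of Rule 2. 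Since both $S$ and $\mathcal{G}$ exceed $K/2$ in size they must intersect; choosing any honest $k^\ast \in S \cap \mathcal{G}$ and chaining $\|\mu_t^{\text{med}} - \nabla f(\tilde{x}_{t-1})\| \leq \|\mu_t^{(j)} - \mu_t^{(k^\ast)}\| + \|\mu_t^{(k^\ast)} - \nabla f(\tilde{x}_{t-1})\| \leq 2\mathcal{V} + \mathcal{V}$ gives the $3\mathcal{V}$ bound. The $4\mathcal{V}$ bound then follows by one more triangle inequality: for honest $k$, $\|\mu_t^{(k)} - \mu_t^{\text{med}}\| \leq \|\mu_t^{(k)} - \nabla f(\tilde{x}_{t-1})\| + \|\nabla f(\tilde{x}_{t-1}) - \mu_t^{\text{med}}\| \leq \mathcal{V} + 3\mathcal{V}$.

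The only real obstacle is the majority argument behind the $3\mathcal{V}$ bound: one must observe that the $>K/2$ cluster defining the median necessarily overlaps the honest set (again leaning on $\alpha < 1/2$), so that $\mu_t^{\text{med}}$ inherits the $\mathcal{V}$-closeness of some honest node to the true gradient. Everything else is the triangle inequality applied to Assumption \ref{Ass: BoundedGradVar}, and the result is precisely the deterministic analogue of Lemma \ref{lem: T_mu}, which explains why the two bounds share the same constants with $\sqrt{C/B_t}$ replaced by $1$.
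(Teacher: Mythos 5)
Your proof is correct and follows the same route the paper intends: part (a) is the triangle inequality applied to the batch average together with the almost-sure bound of Assumption \ref{Ass: BoundedGradVar}, and part (b) is the majority-overlap argument (the $>K/2$ cluster around $\mu_t^{\text{med}}$ must intersect $\mathcal{G}$ since $\alpha < 1/2$) followed by two more triangle inequalities, exactly the deterministic analogue of Lemma \ref{lem: T_mu}(b). The paper's own proof is a one-line appeal to "the triangle inequality and Assumption \ref{Ass: BoundedGradVar}"; you have simply written out the details it leaves implicit, with the correct constants $3\mathcal{V}$ and $4\mathcal{V}$.
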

	\begin{proof}
		{\em (a)} The proof follows from the definition of $\mu_t^{(k)}$ and the application of the triangle inequality along with Assumption \ref{Ass: BoundedGradVar}. 
		
		{\em (b)} follows from the straightforward application of the above. 
	\end{proof}

	\begin{lemma}
		\label{lem: GeomRV}
		If $N \sim \text{Geom}(\Gamma)$ for $\Gamma > 0$. Then for any sequence $D_0, D_1, \ldots$ with $\mathbb{E}|D_N| < \infty$, we have
		$$ \mathbb{E}[D_N - D_{N+1}]  =  \left( \frac{1}{\Gamma} - 1 \right) (D_0 - \mathbb{E}D_N)$$
	\end{lemma}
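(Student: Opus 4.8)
The statement is a purely distributional identity, so the plan is to establish it by a direct power-series computation with the geometric pmf together with a single index shift; no property of the optimization problem enters. First I would fix the convention that $N \sim \mathrm{Geom}(\Gamma)$ means $\mathbb{P}[N = k] = (1 - \Gamma)\Gamma^{k}$ for $k = 0, 1, 2, \ldots$, which is the convention under which $\mathbb{E}[N] = \Gamma/(1-\Gamma)$ and hence $\mathbb{E}[N_t] = B$ when $\Gamma = B/(B+1)$, matching the main text. The integrability hypothesis $\mathbb{E}|D_N| < \infty$ is then precisely the statement that $\sum_{k \geq 0}(1-\Gamma)\Gamma^{k}|D_k| < \infty$, i.e. the series defining $\mathbb{E}[D_N]$ converges absolutely; this is the fact I will invoke to rearrange and reindex the sums freely.

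The heart of the argument is to rewrite $\mathbb{E}[D_{N+1}]$ in terms of $\mathbb{E}[D_N]$ and $D_0$. I would expand and shift the index $j = k+1$:
\begin{align*}
\mathbb{E}[D_{N+1}] = \sum_{k=0}^{\infty} (1-\Gamma)\Gamma^{k} D_{k+1} = \sum_{j=1}^{\infty}(1-\Gamma)\Gamma^{j-1} D_{j} = \frac{1}{\Gamma}\sum_{j=1}^{\infty}(1-\Gamma)\Gamma^{j} D_{j} = \frac{1}{\Gamma}\Bigl(\mathbb{E}[D_N] - (1-\Gamma)D_0\Bigr),
\end{align*}
where the final equality simply restores the missing $j=0$ term. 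Subtracting from $\mathbb{E}[D_N]$ and using $1 - \tfrac{1}{\Gamma} = -\tfrac{1-\Gamma}{\Gamma}$ then gives
\begin{align*}
\mathbb{E}[D_N - D_{N+1}] = \Bigl(1 - \tfrac{1}{\Gamma}\Bigr)\mathbb{E}[D_N] + \tfrac{1-\Gamma}{\Gamma} D_0 = \frac{1-\Gamma}{\Gamma}\bigl(D_0 - \mathbb{E}[D_N]\bigr) = \Bigl(\tfrac{1}{\Gamma} - 1\Bigr)\bigl(D_0 - \mathbb{E}[D_N]\bigr),
\end{align*}
which is exactly the claimed identity.

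\textbf{Main obstacle.} There is no deep difficulty here; the only point requiring care --- and it is mild --- is the justification of the index shift and of splitting off the $j=0$ term inside an infinite series. Both are legitimate because every series involved is absolutely convergent: indeed $\sum_{k\geq 0}(1-\Gamma)\Gamma^{k}|D_{k+1}| = \tfrac{1}{\Gamma}\sum_{j\geq 1}(1-\Gamma)\Gamma^{j}|D_{j}| \leq \tfrac{1}{\Gamma}\,\mathbb{E}|D_N| < \infty$ by hypothesis, so the manipulations above are rearrangements of absolutely convergent sums and are therefore valid. In the main proof this lemma is applied with $D_n = f(x_n^t)$ (and, in Lemma~\ref{lem: InnerProd} and Lemma~\ref{lem: Norm_Sq}, with $D_n$ taken to be other functionals of the stopped inner-loop iterate such as $\langle e_t, x_n^t - x_0^t\rangle$ and $\|x_n^t - x_0^t\|^2$), its sole role being to convert the telescoping increment $\mathbb{E}[D_{N_t} - D_{N_t+1}]$ into the factor $1/B_t = 1/\Gamma - 1$ that propagates through the convergence analysis.
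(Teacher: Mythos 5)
Your proof is correct and complete: the index-shift computation with the pmf $\mathbb{P}[N=k]=(1-\Gamma)\Gamma^{k}$, together with the absolute-convergence check that licenses reindexing and splitting off the $j=0$ term, is exactly the standard argument, and your convention matches the one the paper uses elsewhere (e.g.\ $\mathbb{E}[N_t]=B$ and the pmf written explicitly in the proof of Lemma~\ref{lem: Finite}). The paper gives no proof of its own, only the citation to \citet{Lei_Jordan_SCSG}, where the argument is the same computation, so you have simply filled in the deferred details.
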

	\begin{proof}
		Proof follows from \citet{Lei_Jordan_SCSG}.
	\end{proof}
	
	\begin{lemma}
		\label{lem: Finite}
		For step size $\eta_t \leq \frac{1}{3 L B_t^{{2}/{3}} }$, we have:
		\begin{enumerate}[(i)]
			\item $\mathbb{E}\| \tilde{x}_t - \tilde{x}_{t-1} \|^2 < \infty.$
			\item $\mathbb{E}(f(\tilde{x}_t) - f(\tilde{x}^\ast)) < \infty.$
			\item $\mathbb{E} \| \nabla f(\tilde{x}_t) \|^2 < \infty.$
			\item $\mathbb{E}| \langle e_t, \tilde{x}_t - \tilde{x}_{t-1}  \rangle | < \infty$.
			\item $\mathbb{E} | \langle  e_t , \nabla f (\tilde{x}_t) \rangle | < \infty .$
		\end{enumerate}
	\end{lemma}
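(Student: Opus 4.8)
\emph{Proof plan.} The plan is to prove (i) first and then obtain (ii)--(v) as consequences of (i) together with $L$-smoothness, Lemma~\ref{lem: e_t}, and the Cauchy--Schwarz inequality. Throughout I would condition on the $\sigma$-algebra $\mathcal{H}_t$ generated by all the randomness up to and including the Byzantine-filtering/aggregation step of epoch $t$, so that $x_0^t = \tilde{x}_{t-1}$ and the aggregated gradient $\mu_t$ (hence $e_t = \mu_t - \nabla f(\tilde{x}_{t-1})$) are $\mathcal{H}_t$-measurable, while the inner-loop samples $\xi_1^t,\xi_2^t,\dots$ and the length $N_t$ are independent of $\mathcal{H}_t$. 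From the update $x_{n+1}^t = x_n^t - \eta_t v_n^t$ with $v_n^t = \nabla f(x_n^t;\xi_n^t) - \nabla f(x_0^t;\xi_n^t) + \mu_t$, the $L$-smoothness of each $f(\cdot\,;\xi)$ (Assumption~\ref{Ass: LipCont}) gives $\|v_n^t\| \le L\|x_n^t - x_0^t\| + \|\mu_t\|$, so that
\begin{align*}
\|x_{n+1}^t - x_0^t\| \le (1+\eta_t L)\,\|x_n^t - x_0^t\| + \eta_t\,\|\mu_t\|.
\end{align*}
Unrolling this recursion from $x_0^t - x_0^t = 0$ produces the deterministic (given $\mathcal{H}_t$) bound $\|x_n^t - x_0^t\| \le \frac{\|\mu_t\|}{L}\big((1+\eta_t L)^n - 1\big)$, and therefore $\|x_n^t - x_0^t\|^2 \le \frac{\|\mu_t\|^2}{L^2}(1+\eta_t L)^{2n}$.

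Next I would integrate this bound against the law of $N_t$. Because $N_t \sim \mathrm{Geom}\big(\tfrac{B_t}{B_t+1}\big)$, its mass decays like $(B_t+1)^{-n}$, so
\begin{align*}
\mathbb{E}\big[\|\tilde{x}_t - \tilde{x}_{t-1}\|^2 \,\big|\, \mathcal{H}_t\big] = \sum_{n\ge 0}\mathbb{P}(N_t = n)\,\|x_n^t - x_0^t\|^2 \le \frac{\|\mu_t\|^2}{L^2}\sum_{n\ge 0}\mathbb{P}(N_t = n)\,(1+\eta_t L)^{2n},
\end{align*}
and the geometric series on the right converges precisely when $(1+\eta_t L)^2 < B_t+1$. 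Since $\eta_t \le \frac{1}{3LB_t^{2/3}}$ forces $\eta_t L \le \tfrac13$, we have $(1+\eta_t L)^2 \le \tfrac{16}{9} < 17 \le B_t+1$ for $B_t \ge 16$, so the series sums to a finite constant depending only on $\eta_t L$ and $B_t$. To remove the conditioning I would note that, given $\mathcal{H}_t$, Assumption~\ref{Ass: BoundedGradVar} makes every honest $\mu_t^{(k)}$ lie within $\mathcal{V}$ of $\nabla f(\tilde{x}_{t-1})$ and the filtering rule keeps every surviving $\mu_t^{(k)}$ within a fixed multiple of $\mathcal{V}$ of $\mu_t^{\mathrm{med}}$; hence $\|\mu_t\| \le \|\nabla f(\tilde{x}_{t-1})\| + c\,\mathcal{V}$ for an absolute constant $c$. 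A short induction on $t$ (the base case $\tilde{x}_0$ being deterministic) then shows $\mathbb{E}\|\nabla f(\tilde{x}_{t-1})\|^2 < \infty$, whence $\mathbb{E}\|\mu_t\|^2 < \infty$ and, taking the outer expectation, $\mathbb{E}\|\tilde{x}_t - \tilde{x}_{t-1}\|^2 < \infty$, which is (i).

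Statements (ii) and (iii) would follow from (i) by smoothness. Decomposing $\nabla f(\tilde{x}_t) = \big(\nabla f(\tilde{x}_t) - \nabla f(\tilde{x}_{t-1})\big) + \nabla f(\tilde{x}_{t-1})$ and using $\|\nabla f(\tilde{x}_t) - \nabla f(\tilde{x}_{t-1})\| \le L\|\tilde{x}_t - \tilde{x}_{t-1}\|$ gives $\|\nabla f(\tilde{x}_t)\|^2 \le 2L^2\|\tilde{x}_t - \tilde{x}_{t-1}\|^2 + 2\|\nabla f(\tilde{x}_{t-1})\|^2$, whose expectation is finite by (i) and the induction hypothesis, proving (iii). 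For (ii), the descent inequality $f(\tilde{x}_t) \le f(\tilde{x}_{t-1}) + \langle \nabla f(\tilde{x}_{t-1}),\tilde{x}_t - \tilde{x}_{t-1}\rangle + \frac{L}{2}\|\tilde{x}_t - \tilde{x}_{t-1}\|^2$, combined with $f(\tilde{x}_t) \ge f(\tilde{x}^\ast)$ and (i), bounds $\mathbb{E}\big[f(\tilde{x}_t) - f(\tilde{x}^\ast)\big]$ both from above and below.

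Finally, (iv) and (v) would follow from Cauchy--Schwarz: $\mathbb{E}|\langle e_t, w\rangle| \le \big(\mathbb{E}\|e_t\|^2\big)^{1/2}\big(\mathbb{E}\|w\|^2\big)^{1/2}$ with $w = \tilde{x}_t - \tilde{x}_{t-1}$ for (iv) and $w = \nabla f(\tilde{x}_t)$ for (v); here $\mathbb{E}\|e_t\|^2 < \infty$ by Lemma~\ref{lem: e_t}, while $\mathbb{E}\|w\|^2 < \infty$ by (i) and (iii) respectively. The hard part is (i): the crux is verifying that the chosen step size and batch size keep the geometric growth rate $(1+\eta_t L)^2$ of the inner iterates strictly below the geometric decay rate $B_t+1$ of the inner-loop length, so that the geometric series defining the conditional second moment converges; the remaining bounds are then routine.
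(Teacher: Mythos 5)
Your reduction of (ii)--(v) to (i) via Cauchy--Schwarz and smoothness is fine, but the proof of (i) contains a fatal quantitative error, and it occurs exactly at the point where the lemma is delicate. You assert that the mass of $N_t \sim \mathrm{Geom}\big(\tfrac{B_t}{B_t+1}\big)$ ``decays like $(B_t+1)^{-n}$'' and hence that $\sum_{n}\mathbb{P}(N_t=n)(1+\eta_t L)^{2n}$ converges whenever $(1+\eta_t L)^2 < B_t+1$. In fact $\mathbb{P}(N_t=n) = \frac{1}{B_t+1}\big(\frac{B_t}{B_t+1}\big)^n$, so the tail decays only like $\big(1-\frac{1}{B_t+1}\big)^n \approx e^{-n/B_t}$ (the mean inner-loop length is $B_t$), and the series converges if and only if $(1+\eta_t L)^2 < \frac{B_t+1}{B_t} = 1+\frac{1}{B_t}$. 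With $\eta_t L \le \frac{1}{3B_t^{2/3}}$ this would require roughly $\frac{2}{3B_t^{2/3}} < \frac{1}{B_t}$, i.e.\ $B_t < 27/8$, which is incompatible with $B_t \ge 16$; already at $B_t = 16$ one has $(1+\eta_t L)^2 \approx 1.108 > 1.0625 = 1+\frac{1}{16}$. So the conditional expectation you compute is $+\infty$: the pathwise triangle-inequality bound $\|x_n^t - x_0^t\| \le \frac{\|\mu_t\|}{L}\big((1+\eta_t L)^n - 1\big)$ grows at per-step rate $1+\eta_t L = 1+O(B_t^{-2/3})$, which is too fast to be integrable against a geometric inner-loop length of mean $B_t$. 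The bound is valid but useless, and the argument does not establish (i).

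The paper closes this gap by tracking the Lyapunov quantity $L_n^t = 20\eta_t B_t\,\mathbb{E}\big(f(x_n^t)-f(\tilde{x}^\ast)\big) + \mathbb{E}\|x_n^t - x_0^t\|^2$ rather than the distance alone. Applying Young's inequality to the cross terms with the weight $\beta = 8\eta_t B_t$ pushes the coefficient multiplying $\|x_n^t - x_0^t\|^2$ down to $O(1/B_t)$ at the cost of a $8\eta_t^2 B_t\|\nabla f(x_n^t)\|^2$ term, which is in turn absorbed by the expected one-step decrease of $f$. The resulting recursion $L_{n+1}^t \le \big(1+\frac{\gamma}{B_t}\big)L_n^t + 35\eta_t^2 B_t\,\mathbb{E}\|e_t\|^2$ with $\gamma = \frac{711}{972} < 1$ has per-step growth strictly below $1+\frac{1}{B_t}$, so that $\mathbb{E}\big[\big(1+\frac{\gamma}{B_t}\big)^{N_t}\big] \le \frac{B_t+1}{1-\gamma} < \infty$ and all five claims follow simultaneously. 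This coupling of the squared displacement with the function-value gap is the idea your argument is missing; without it the geometric series does not close.
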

	\begin{proof}
		The lemma is proven using induction and follows the same structure as the proof in \citet{Lei_Jordan_SCSG}. The second inequality \eqref{eq: For_Finite_T1} in the proof of theorem yields
		\begin{align}
			\mathbb{E}_{\xi_n^t}  f(x_{n + 1}^t)  &  \leq  f(x_n^t) - \eta_t (1 - L\eta_t) \| \nabla f(x_n^t) \|^2  \nonumber\\
			&  \qquad \qquad   -  \eta_t      \langle  e_t  ,  \nabla f(x_n^t)   \rangle + \frac{L^3 \eta_t^2}{2} \| x_n^t - x_0^t \|^2 + L \eta_t^2 \| e_t \|^2,
			\label{eq: FinitenessPf_1}
		\end{align}
		using Young's inequality  $  \langle  a , b \rangle \leq   \frac{1}{2 \beta}  \|a \|^2 +  \frac{\beta}{2}   \| b\|^2   $ for any $\beta > 0$, on $- \eta_t      \langle  e_t  ,  \nabla f(x_n^t)   \rangle$ with $\beta = \frac{1}{2}$ we get:
		\begin{align*}
			- \eta_t      \langle  e_t  ,  \nabla f(x_n^t)   \rangle  \leq   \eta_t \| e_t\|^2 + \frac{\eta_t}{4} \|  \nabla f(x_n^t) \|^2 .
		\end{align*}
		Moreover, using the fact that $\eta_t \leq \frac{1}{3 L B_t^{{2}/{3}} } \leq \frac{1}{4L}$ since $B_t \geq 16$ and rearranging the terms in \eqref{eq: FinitenessPf_1} we have
		\begin{align}
			& \eta_t \bigg(1 - L\eta_t -  \frac{1}{4} \bigg) \|  \nabla f(x_n^t) \|^2 	    \nonumber\\
			& \qquad \qquad \qquad \leq  f(x_n^t) - \mathbb{E}_{\xi_n^t}  f(x_{n + 1}^t)  + \frac{L^3 \eta_t^2}{2} \| x_n^t - x_0^t \|^2 + \eta_t (1 + L \eta_t) \| e_t \|^2 \nonumber \\
			&	\eta_t \bigg(1 - \frac{1}{4} -  \frac{1}{4} \bigg) \|  \nabla f(x_n^t) \|^2 	  \nonumber\\
			& \qquad \qquad \qquad \leq  f(x_n^t) - \mathbb{E}_{\xi_n^t}  f(x_{n + 1}^t)   + \frac{L^3 \eta_t^2}{2} \| x_n^t - x_0^t \|^2 + \eta_t \bigg(1 + \frac{1}{4} \bigg) \| e_t \|^2 \nonumber\\
			&	\eta_t   \|  \nabla f(x_n^t) \|^2 	  \leq 2 \big( f(x_n^t) - \mathbb{E}_{\xi_n^t}  f(x_{n + 1}^t) \big)  +  L^3 \eta_t^2  \| x_n^t - x_0^t \|^2 + \frac{5 \eta_t}{2}  \| e_t \|^2 .
			\label{eq: FinitenessPf_2}
		\end{align}
		Now using the first inequality \eqref{eq: For_Finite_L5} in Proof of Lemma \ref{lem: Norm_Sq}, we have
		\begin{align*}
			\mathbb{E}_{\xi_n^t} \| x_{n + 1}^t - x_0^t \|^2 &  \leq   (1 + \eta_t^2 L^2)  \| x_n^t  - x_0^t  \|^2  -  2\eta_t \langle  \nabla f(x_n^t)   , x_n^t - x_0^t  \rangle \nonumber\\
			& \qquad \qquad \qquad  - 2 \eta_t \langle   e_t , x_n^t - x_0^t   \rangle + 2 \eta_t^2 \| \nabla f(x_n^t) \|^2  + 2 \eta_t^2 \|e_t \|^2 ,
		\end{align*}
		using Young's inequality  $  \langle  a , b \rangle \leq   \frac{\beta}{2 }  \|a \|^2 +  \frac{1}{2\beta}   \| b\|^2   $ for any $\beta > 0$, on $-  2\eta_t \langle  \nabla f(x_n^t)   , x_n^t - x_0^t  \rangle $ and $- 2 \eta_t \langle   e_t , x_n^t - x_0^t   \rangle$ with $\beta = 8 \eta_t B_t$ we get:
		\begin{align*}
			-  2\eta_t \langle  \nabla f(x_n^t)   , x_n^t - x_0^t  \rangle & \leq  8 \eta_t^2 B_t \| \nabla f(x_n^t) \|^2 + \frac{1}{8 B_t} \|x_n^t - x_0^t \|^2 \\
			- 2 \eta_t \langle   e_t , x_n^t - x_0^t   \rangle & \leq  8 \eta_t^2 B_t  \|e_t\|^2   + \frac{1}{8 B_t}  \| x_n^t - x_0^t  \|^2,
		\end{align*}
		Therefore, we get:
		\begin{align}
			&	\mathbb{E}_{\xi_n^t} \| x_{n + 1}^t - x_0^t \|^2 \nonumber\\
			& \quad   \leq   \bigg(1 + \eta_t^2 L^2  + \frac{1}{4 B_t} \bigg)  \| x_n^t  - x_0^t  \|^2  + (2 \eta_t^2 + 8 \eta_t^2 B_t)   \|  \nabla f(x_n^t) \|^2   + (  2 \eta_t^2 + 8 \eta_t^2 B_t ) \|   e_t  \|^2  \nonumber \\
			& \quad   \overset{(a)}{\leq}   \bigg(1 +   \frac{13}{36 B_t} \bigg)  \| x_n^t  - x_0^t  \|^2  + 10 \eta_t^2 B_t   \|  \nabla f(x_n^t) \|^2   + 10 \eta_t^2 B_t  \|   e_t  \|^2   
			\label{eq: FinitenessPf_3}
		\end{align}
		where $(a)$ used the fact that we $\eta_t L \leq \frac{1}{3 B_t^{{2}/{3}} }$. Now plugging \eqref{eq: FinitenessPf_2} into \eqref{eq: FinitenessPf_3} we get:
		\begin{align}
			&	\mathbb{E}_{\xi_n^t} \| x_{n + 1}^t - x_0^t \|^2  \leq  \bigg(1 +   \frac{13}{36 B_t} \bigg)  \| x_n^t  - x_0^t  \|^2  + 10 \eta_t^2 B_t  \|   e_t  \|^2   \nonumber\\
			& \qquad \qquad	+ 20 \eta_t B_t \big(  f(x_n^t) - \mathbb{E}_{\xi_n^t} f(x_{n + 1}^t) \big) + 10 \eta_t^3 L^3 B_t \| x_n^t - x_0^t \|^2 + 25 \eta_t^2 B_t \| e_t \|^2  \nonumber \\
			&  \leq \bigg(1 +   \frac{13}{36 B_t}  + 10 \eta_t^3 L^3 B_t \bigg)  \| x_n^t  - x_0^t  \|^2   + 20 \eta_t B_t \big(  f(x_n^t) - \mathbb{E}_{\xi_n^t} f(x_{n + 1}^t) \big) +  35 \eta_t^2 B_t \| e_t \|^2 \nonumber \\
			& \overset{(a)}{\leq} \bigg(1 +   \frac{711}{972 B_t}  \bigg)  \| x_n^t  - x_0^t  \|^2   + 20 \eta_t B_t \big(  f(x_n^t) - \mathbb{E}_{\xi_n^t} f(x_{n + 1}^t) \big) +  35 \eta_t^2 B_t \| e_t \|^2 
			\label{eq: FinitenessPf_4}
		\end{align}
		where $(a)$ follows from using $\eta_t L \leq \frac{1}{3 B_t^{{2}/{3}} }$. Let us assume
		$$ L_n^t  = 20 \eta_t B_t \mathbb{E} \big(  f(x_n^t) - f(\tilde{x}^\ast) \big)  + \mathbb{E} \| x_n^t - x_0^t \|^2$$
		Taking expectation over \eqref{eq: FinitenessPf_4} we get:
		\begin{align*}
			L_{n + 1}^t \leq \bigg(  1 + \frac{711}{972 B_t} \bigg) L_n^t   +  35 \eta_t^2 B_t \mathbb{E} \| e_t \|^2
		\end{align*} 
		Denoting $\gamma =  \frac{711}{972} < 1$ we have: 
		\begin{align*}
			L_{n + 1}^t & \leq \bigg(  1 + \frac{\gamma}{ B_t} \bigg) L_n^t   +  35 \eta_t^2 B_t \mathbb{E} \| e_t \|^2 \\
			L_{n + 1}^t   +   \frac{35 \eta_t^2 B_t^2 \mathbb{E} \| e_t \|^2 }{\gamma}	&  \overset{(a)}{\leq}  \bigg(  1 + \frac{\gamma}{ B_t} \bigg) L_n^t   + \bigg(  1 + \frac{\gamma}{ B_t} \bigg)   \frac{35 \eta_t^2 B_t^2 \mathbb{E} \| e_t \|^2 }{\gamma} \\
			L_{n + 1}^t   +   \frac{35 \eta_t^2 B_t^2 \mathbb{E} \| e_t \|^2 }{\gamma}	&   \leq  \bigg(  1 + \frac{\gamma}{ B_t} \bigg) \bigg( L_n^t   +     \frac{35 \eta_t^2 B_t^2 \mathbb{E} \| e_t \|^2 }{\gamma} \bigg)
		\end{align*} 
		where $(a)$ follows from adding and subtracting $ \frac{35 \eta_t^2 B_t^2 \mathbb{E} \| e_t \|^2 }{\gamma}$ on both sides. 
		this implies that we have:
		\begin{align*}
			L_{n}^t \leq \bigg(  1 + \frac{\gamma}{B_t} \bigg)^n  \bigg( L_0^t   +  \frac{35 \eta_t^2 B_t^2 \mathbb{E} \| e_t \|^2}{\gamma} \bigg)
		\end{align*}
		Since we have: $N_t \sim \text{Geom}\left(\frac{B_t}{B_t + 1}\right)$, and assuming $N_t$ can be $0$ we have
		$$ \mathbb{P}[N_t = n] = \frac{1}{B_t + 1} \left(  \frac{B_t}{B_t + 1}  \right)^n \leq \left(  \frac{B_t}{B_t + 1}  \right)^n .$$
		Now the term: 
		$$\mathbb{E} \bigg[ \bigg(  1 + \frac{\gamma}{  B_t} \bigg)^{N_t} \bigg] \leq \sum_{ n \geq 0} \left(  \frac{ B_t + \gamma}{B_t}   \times \frac{B_t}{B_t + 1}  \right)^n  =  \sum_{ n \geq 0} \left(  \frac{ B_t + \gamma}{  B_t +  1}  \right)^n  \overset{(a)}{=} \frac{ B_t + 1}{1 - \gamma}.$$
		$(a)$ follows since $\gamma = \frac{711}{972} < 1$.	This implies that:
		$$ \mathbb{E} L_{N_t}^t \leq \frac{ B_t + 1}{1 - \gamma}\big( L_0^t   +  70 \eta_t^2 B_t \mathbb{E} \| e_t \|^2 \big) .$$
		which is finite since $\mathbb{E}\| e_t \| < \infty$ is finite by Lemma \ref{lem: e_t} as well as the filtering rule of Algorithm \ref{alg1}. The induction hypothesis implies that $\mathbb{E} L_{N_t} < \infty$.
		
		All the claims follow. 
	\end{proof}
	
	\begin{lemma}[\citet{Alistarh_NIPS_2018} Lemma 2.4]
		\label{lem: Pinelis}
		Let the sequence of random variables $X_1, X_2, \ldots, X_N \in \mathbb{R}^d$ represent a random process such that we have $\mathbb{E}[X_n | X_1, \ldots, X_{n-1}] = 0$ and $\| X_n \| \leq M$. Then, 
		$$ \mathbb{P}[\| X_1 + \ldots + X_N \|^2 \leq 2 \log(2/\delta) M^2 N] \geq 1 - \delta.$$
	\end{lemma}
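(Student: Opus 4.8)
The plan is to recognize this as a dimension-free Azuma--Hoeffding (Pinelis-type) concentration inequality for a vector-valued martingale, and to prove it by the Chernoff / exponential-moment method applied to the Euclidean norm of the partial sum $S_n = \sum_{i=1}^n X_i$. Since $N$ is deterministic, the target is equivalent to the tail bound
\begin{align}
\mathbb{P}\big[\, \| S_N \| \geq r \,\big] \leq 2\exp\!\Big( -\frac{r^2}{2 M^2 N} \Big),
\label{eq: pinelis_tail}
\end{align}
from which the stated claim follows immediately: setting $r = M\sqrt{2 N \log(2/\delta)}$ makes the right-hand side equal to $2\,(\delta/2) = \delta$, so that $\|S_N\|^2 \leq 2 \log(2/\delta) M^2 N$ holds with probability at least $1 - \delta$. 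This substitution is the only step I would write out in full detail.

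To establish \eqref{eq: pinelis_tail} I would build an exponential supermartingale from $\cosh$. Writing $\mathbb{E}_{n-1}[\cdot]$ for the conditional expectation given $X_1,\ldots,X_{n-1}$, the heart of the argument is the one-step estimate
\begin{align}
\mathbb{E}_{n-1}\big[ \cosh(\lambda \| S_n \|) \big] \leq \cosh(\lambda \| S_{n-1} \|)\, \exp\!\Big( \frac{\lambda^2 M^2}{2} \Big),
\label{eq: onestep}
\end{align}
valid for every $\lambda > 0$. Granting \eqref{eq: onestep}, iterating over $n = 1,\ldots,N$ and using $S_0 = 0$ gives $\mathbb{E}[\cosh(\lambda \|S_N\|)] \leq \exp(\lambda^2 M^2 N/2)$. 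Then by monotonicity of $\cosh$ on $[0,\infty)$, Markov's inequality, and $\cosh(x) \geq \tfrac12 e^{x}$,
\begin{align*}
\mathbb{P}\big[ \|S_N\| \geq r \big] \leq \frac{\mathbb{E}[\cosh(\lambda \|S_N\|)]}{\cosh(\lambda r)} \leq 2\exp\!\Big( \frac{\lambda^2 M^2 N}{2} - \lambda r \Big),
\end{align*}
and optimizing the free parameter at $\lambda = r/(M^2 N)$ yields exactly \eqref{eq: pinelis_tail}.

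The main obstacle is the dimension-free one-step bound \eqref{eq: onestep}. A naive reduction to the scalar case --- applying scalar Azuma to $\langle \theta, S_N\rangle$ for each direction $\theta$ and taking a union bound over a net of the unit sphere --- would introduce a factor depending on the ambient dimension $d$, which is precisely what this lemma (and the dimension-independence claim of the whole paper) must avoid. The correct route exploits the $2$-uniform smoothness of the Hilbert space $\mathbb{R}^d$: using the exact identity $\|S_n\|^2 = \|S_{n-1}\|^2 + 2\langle S_{n-1}, X_n\rangle + \|X_n\|^2$, the martingale property $\mathbb{E}_{n-1}[X_n]=0$ annihilates the cross term in expectation while the increment is controlled by $\|X_n\| \leq M$. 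Promoting this second-moment control to the exponential estimate \eqref{eq: onestep} is the content of Pinelis' martingale inequality in smooth Banach spaces; the one genuinely delicate point is handling the $\sinh$ cross-term produced by the addition formula $\cosh(a+b)=\cosh a\,\cosh b + \sinh a\,\sinh b$ with $a = \lambda\|S_{n-1}\|$ and $b = \lambda(\|S_n\|-\|S_{n-1}\|)$, where $|b|\leq \lambda M$, since by Jensen $\mathbb{E}_{n-1}\|S_n\| \geq \|S_{n-1}\|$ so the linear part does not simply vanish. Everything else is the routine Chernoff bookkeeping above. As the statement is quoted verbatim as \citet{Alistarh_NIPS_2018} Lemma 2.4, in practice I would invoke that reference for \eqref{eq: onestep}--\eqref{eq: pinelis_tail} and carry out only the final substitution $r = M\sqrt{2N\log(2/\delta)}$.
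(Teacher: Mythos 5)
Your proposal is correct and matches the paper's treatment: the paper supplies no proof of this lemma, importing it verbatim from Alistarh et al. (2018), and you likewise ultimately invoke that reference, while your added sketch (reduce to the tail bound $\mathbb{P}[\|S_N\|\geq r]\leq 2\exp(-r^2/(2M^2N))$ via the Pinelis $\cosh$-supermartingale and then substitute $r = M\sqrt{2N\log(2/\delta)}$) is the standard, dimension-free argument and is carried out without error. Your observation that a net-plus-union-bound reduction to the scalar case would reintroduce a dependence on $d$, defeating the paper's dimension-independence claim, is also accurate.
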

	
	\begin{lemma}
		\label{lem: Norm_Ineq}
		For $X_1, X_2, \ldots, X_n \in \mathbb{R}^d$, we have 
		\begin{align*}
			\|X_1 + X_2 + \ldots + X_n \|^2 \leq n \| X_1\|^2 + n \| X_2\|^2+ \ldots + n \| X_n\|^2.
		\end{align*}
	\end{lemma}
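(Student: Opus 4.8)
The plan is to recognize Lemma~\ref{lem: Norm_Ineq} as a standard consequence of the convexity of the squared Euclidean norm (equivalently, a double use of Cauchy--Schwarz), so the argument will be short and elementary. I would present the cleanest route via Jensen's inequality and note an alternative via Cauchy--Schwarz.

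First I would invoke convexity. The map $z \mapsto \|z\|^2$ is convex on $\mathbb{R}^d$, so applying Jensen's inequality to the uniform average of $X_1, \ldots, X_n$ gives
$$\left\| \frac{1}{n}\sum_{i=1}^n X_i \right\|^2 \leq \frac{1}{n}\sum_{i=1}^n \|X_i\|^2.$$
Multiplying both sides by $n^2$ immediately yields the claimed bound $\left\|\sum_{i=1}^n X_i\right\|^2 \leq n \sum_{i=1}^n \|X_i\|^2$, which completes the proof.

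As an equivalent alternative, I would expand the squared norm as a double sum of inner products and apply Cauchy--Schwarz twice:
$$\left\| \sum_{i=1}^n X_i \right\|^2 = \sum_{i=1}^n\sum_{j=1}^n \langle X_i, X_j \rangle \leq \left( \sum_{i=1}^n \|X_i\| \right)^2 \leq n \sum_{i=1}^n \|X_i\|^2,$$
where the first inequality bounds each $\langle X_i, X_j\rangle \leq \|X_i\|\|X_j\|$ and the last is Cauchy--Schwarz applied to the all-ones vector against $(\|X_1\|, \ldots, \|X_n\|)$.

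There is no genuine obstacle here: the statement is a textbook fact and each step is a single invocation of a standard inequality. The only point requiring care is keeping the constant exactly $n$, rather than the $2^{n-1}$ that a naive repeated application of $\|a+b\|^2 \leq 2\|a\|^2 + 2\|b\|^2$ would produce; both routes above handle this correctly in one shot.
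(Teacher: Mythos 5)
Your proof is correct: both the Jensen route (convexity of $z \mapsto \|z\|^2$ applied to the uniform average, then scaling by $n^2$) and the Cauchy--Schwarz route give exactly the stated constant $n$. The paper states Lemma \ref{lem: Norm_Ineq} without any proof, treating it as a standard fact, so there is nothing to compare against; your argument fills that gap cleanly.
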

	
\end{document}